\theoremstyle{definition}
\newtheorem{rem}[subsubsection]{Remark}
\theoremstyle{plain}
\newtheorem{prop}[subsubsection]{Proposition}
\newtheorem{thm}[subsubsection]{Theorem}
\newtheorem{lem}[subsubsection]{Lemma}
\newtheorem{cor}[subsubsection]{Corollary}
\newcommand{\Hom}{\mathrm{Hom}}
\newcommand{\mbf}{\mathbf}
\newcommand{\mbb}{\mathbb}
\newcommand{\mrm}{\mathrm}
\newcommand{\A}{\mathcal A}
\newcommand{\C}{\mbb C}
\newcommand{\F}{\mathcal F}
\newcommand{\tF}{\tilde \F}
\newcommand{\GL}{\mrm{GL}}
\newcommand{\K}{\mbf K}
\newcommand{\q}{q^{ \tiny \frac{1}{2}}}
\newcommand{\ro}{\mbox{\small ro}}
\newcommand{\co}{\mbox{\small  co}}
\newcommand{\bin}[2]{{\renewcommand\arraystretch{0.5}
\left[\!\begin{array}{c}
{\scriptstyle #1}\\{\scriptstyle #2}\end{array}\!\right]}}
\title[Geometric quantum  $\GL(n)$]
{On quantum $\mbf{GL}(n)$}
\author{  Yiqiang Li}
\address{Department of Mathematics\\   State University of New York at Buffalo\\
244 Mathematics Building, Buffalo, NY 14260}
\email{yiqiang@buffalo.edu}
\date{\today}
\keywords{Quantum $\GL(n)$, partial flag variety.} 
\subjclass{20G42, 20G43, 14F05, 14F43}
\begin{document}

\begin{abstract}
The quantum $\GL(n)$ of Faddeev-Reshetikhin-Takhtajan and Dipper-Donkin are realized geometrically by using double partial flag varieties. 
As a consequence, the difference of  these two Hopf algebras is caused by  a twist of a cocycle in the multiplication.  
\end{abstract}

\maketitle

\section{Introduction}

Let $\GL(n)$ be the general linear group and $\mathfrak{gl}(n)$  its Lie algebra. 
In ~\cite{BLM90},  a geometric realization of quantum $\mathfrak{gl}(n)$ (or rather the q-Schur algebra) is given by using double flag varieties. 
From such a construction arises the so-called modified quantum $\mathfrak{gl}(n)$ and its canonical basis. Such classes of algebras play important roles in the higher representation theory (\cite{KL10}). 

It is natural to ask if the quantum $\GL(n)$ itself admits a geometric realization. Since quantum $\GL(n)$ is, in principle,   dual to quantum $\mathfrak {gl}(n)$,  one may expect to get an answer from the dual construction of ~\cite{BLM90}.  
However, the answer to such a question is subtle because the group $\GL(n)$  admits  several quantizations: one by Faddeev-Reshetikhin-Takhtajan (\cite{FRT88}, ~\cite{FT86}), one by Dipper-Donkin (\cite{DD91}), 
 one by Takeuchi (\cite{T90}) and  one by Artin-Schelter-Tate (\cite{AST91}).

In this paper, we show that the dual construction of ~\cite{BLM90} together with the coproduct defined by Grojnowski (\cite{Grojnowski}) and Lusztig (\cite{Lu99}) is isomorphic to the quantum $\GL(n)$ of Dipper-Donkin. 
The quantum $\GL(n)$ of Faddeev-Reshetikhin-Takhtajan  is also obtained from this setting by twisting a cocycle on the multiplication. 
In the geometric realization of both quantizations, the comultiplication are the same. This shows that the two quantizations are isomorphic as coalgebras, which was  proved by Du, Parshall and Wang (\cite{DPW91}) by a different method twenty years ago. 
A closer look at the geometric construction yields that the basis $E^M$ in the quantum $\GL(n)$ is the same as the basis consisting of all characteristic functions of certain orbits up to a twist. 
One interesting fact in the geometric realization is that the quantum determinants in both quantizations get identified with a certain Young symmetrizer.  This symmetrizer gives rise to the determinant representation of $\GL(n)$ and    the parameter in the quantization process does not appear. 

The geometric setting  is suitable to investigate many topics on quantum $\GL(n)$ such as quantum Howe $\GL(m)$-$\GL(n)$ duality (see ~\cite{W01}, ~\cite{Zh03}) and  (dual) canonical basis of quantum $\GL(n)$   in ~\cite{Zhang04} and ~\cite[29.5]{Lusztig93}. 
We hope to come back to these topics in the future. 

\setcounter{tocdepth}{1}
\tableofcontents

\section{Quantum $\GL(n)$, definitions}

We refer to ~\cite{PW91}, ~\cite{T89}, ~\cite{KS97}, ~\cite{KS98}, ~\cite{M88}, ~\cite{NYM93},  ~\cite{W87a} and ~\cite{W87b} for more details. We shall recall the definitions of quantum $\GL(n)$.

\subsection{Quantum group $\GL_v(n)$  of Faddeev-Reshetikhin-Takhtajan}
\label{qmatrix}

Let $\C$ be the field of complex numbers. We fix a nonzero element $v$ in $\C$.  
Note that the parameter $v$ is denoted by $q$ in the literature, such as ~\cite{FRT88} and ~\cite{DD91}. We reserve the letter $q$ for the finite field $\mbb F_q$ of $q$ elements. 

Following ~\cite{FRT88}, the quantum matrix algebra $A_v(n)$ of Faddeev-Reshetikhin-Takhtajan is defined to be  the  unital associative algebra over $\C$ generated by the symbols $E_{st}$ for $1\leq i, j\leq n$ and subject to the following defining relations:
\begin{align*}
&E_{ik}E_{jl} =E_{jl}E_{ik}, & & \forall i>j, k<l;\\
&E_{ik}E_{jl}=E_{jl}E_{ik} + (v-v^{-1}) E_{jk}E_{il}, &&\forall  i>j, k>l;\\
& E_{ik}E_{il}=v E_{il}E_{ik}, && \forall k>l;\\
&E_{ik}E_{jk} =vE_{jk}E_{ik}, &&\forall i>j.
\end{align*}
The algebra $A_v(n)$ admits a bialgebra structure whose   comultiplication   
\[
\Delta: A_v(n) \to A_v(n)\otimes A_v(n)
\]
is defined by
\begin{align}
\label{A-co}
\Delta (E_{st}) =\sum_{k=1}^n E_{ik}\otimes E_{kj} \quad \forall 1\leq i, j\leq n,
\end{align}
and the counit
$\epsilon : A_v(n) \to \C$ is  given by $\epsilon (E_{st}) =\delta_{st}$ for any $1\leq i, j\leq n$.
Let 
\begin{align}
\label{FRT-determinant}
\mrm{det}_v = \sum_{\sigma\in S_n} (-v)^{-l(\sigma)} E_{1, \sigma(1)}\cdots E_{n,\sigma(n)},
\end{align}
be the quantum determinant of $A_v(n)$, where $S_n$ is the symmetric group of $n$ letters and $l$ the length function. 
It is well-known that $\mrm{det}_v$ is a central element in $A_v(n)$.
The quantum group  $\GL_v(n)$ of Faddeev-Reshetikhin-Takhtajan is (the would-be group whose coordinate ring is)  the algebra obtained by localizing $A_v(n)$ at $\mrm{det}_v$, i.e., 
\[
A_v(n) \otimes_{\C} \C[T]/ < T\mrm{det}_v -1>.
\]
The comultiplication $\Delta$ of $A_v(n)$ extends naturally to a comultiplication of $\GL_v(n)$, still denoted by $\Delta$,  of  $\GL_v(n)$. In particular, $\Delta (T) = T\otimes T$.
Since $T$ is the inverse of $\mrm{det}_v$, we have $T=\mrm{det}^{-1}_v$.

Fix two integers $i$ and $j$ among the set $\{1,\cdots, n\}$, consider the subalgebra of $A_v(n)$ generated by the generators $E_{k, l}$ for $k\neq i$ and $l \neq j$. 
The resulting algebra is isomorphic to $A_v(n-1)$. So its quantum determinant, denoted by $A(i, j)$,  is well defined. 
The antipode $S$ of $\GL_v(n)$ is defined by 
\begin{align}
\label{antipode}
S(E_{st}) = (-v)^{j-i} A(j, i) \mrm{det}_v^{-1}, \quad \forall 1\leq i, j \leq n.
\end{align}

The datum $ (\GL_v(n), \Delta, \epsilon, S)$ is a non-commutative, non-cocommutative Hopf algebra.

\subsection{Quantum group $\GL^{DD}_v(n)$ of Dipper-Donkin} 
\label{qmatrixDD}
Following ~\cite{DD91}, 
the quantum matrix algebra $B_v(n)$ is defined to be the unital associative algebra over $\C$ generated by the symbols $c_{st}$, for $1\leq i, j\leq n$, and subject to the following 
relations:
\begin{align*}
& c_{ik}c_{jl} = v c_{jl} c_{ik}, && \forall i> j, k\leq l,\\
& c_{ik}c_{jl} =c_{jl} c_{ik} + (v-1) c_{jk} c_{il}, && \forall i>j ,  k> l,\\
& c_{ik} c_{il} = c_{il} c_{ik}, && \forall i, l, k. 
\end{align*}
The triple  $(B_v(n), \Delta, \epsilon)$, where $\Delta $ and $\epsilon$ are defined   in Section ~\ref{qmatrix}, is again a bialgebra. 
Following ~\cite[4.1.7]{DD91}, let 
\begin{align}
\label{DD-determinant}
\mrm{det}^{DD}_v= \sum_{\sigma\in S_n} (-v)^{-l(\sigma)}  c_{\sigma(1), 1} \cdots c_{\sigma(n),n},
\end{align}
be the quantum determinant of $B_v(n)$.  (Note that this definition is equivalent to the other definitions in ~\cite{DD91} for $v$ invertible.) The quantum group $\GL^{DD}_v(n)$ is (the group whose coordinate algebra is ) the algebra obtained from $B_v(n)$  by localizing at $\mrm{det}^{DD}_v$. 

Similar to the definition of $A(i, j)$, we can define the element $A(i, j)^{DD}$. Then the antipode of $\GL_v^{DD}(n)$ is given by
\begin{align}\label{antipode-DD}
S^{DD}(c_{st}) = (-1)^{i+j} A^{DD}(j,i) \mrm{det}^{DD, -1}_v, \quad \forall 1\leq i, j\leq n,
\end{align}
where $\mrm{det}^{DD, -1}_v$ is the inverse of $\mrm{det}^{DD}_v$.

The datum $(\GL^{DD}_v(n), \Delta, \epsilon, S^{DD})$ is a Hopf algebra, where $\epsilon$ is defined in the same way as that of $\GL_v(n)$.

\subsection{Notation}
If the parameter $v$ is known, we simply substitute it by the number, 
for example, we have $A_{\q}(n)$, $\GL_{\q}(n)$, $B_q(n)$ and $\GL^{DD}_q(n)$.

\section{Geometric set up}

\label{GeomK}

\subsection{Operations}
\label{operations}

Let $f: Y\to X$ be a map between two finite sets. For a given complex-valued  function $\phi$ over $X$, we define a function $f^* \phi$ on $Y$ by 
\[
f^*(\phi) (y) = \phi(f(y)), \quad \forall y\in Y.
\]
For a given function $\psi$ over $Y$, we define a function $f_!(\psi)$ over $X$ by
\[
f_!(\psi)(x) =\sum_{y\in f^{-1}(x) } \psi(y), \quad \forall x\in X. 
\]
If the numbers of elements in the fibers of $f$  are the same, we define
\[
f_{\flat} = \frac{f_!}{\# f^{-1}(x)},  
\]
where $x$ is any element in $X$.

\subsection{Partial flags}

Let $\mbf  P$ be the monoid $\mbb N^n$ of all $n$-tuples $\mbf d=(d_1, d_2, \cdots, d_n)$  of non negative integers. The set $\mbf P$ admits a partition
\[
\mbf P =\sqcup_{d\in \mbb N} \mbf P_d, \quad \mbf P_d=\{ \mbf d \in \mbf P| d_1 +d_2+\cdots+ d_n = d\}.
\] 
Let $\Theta$ be the set of non negative integer valued matrix of size  $n\times n$. The set $\Theta$ admits  a partition
\[
\Theta =\sqcup_{d\in \mbb N} \Theta_d , \quad \Theta_d =\{ M=(m_{st}) \in \Theta | \sum_{1\leq i, j \leq n} m_{st} =d\}.
\]
We define two maps $\ro, \co : \Theta \to \mbf P$ by   
\[
\ro(M) = \left ( \sum_{j=1}^n m_{1j}, \cdots, \sum_{j=1}^n m_{nj}  \right ) \quad \mbox{and} \quad
\co(M) = \left (\sum_{i=1}^n m_{i1}, \cdots, \sum_{i=1}^n m_{in} \right ),
\]
for any $M\in \Theta$. We set $\Theta_d(\mbf c, \mbf d)$ to be the subset in $\Theta_d$ such that $\ro(M) =\mbf c$ and $\co(M) =\mbf d$. 

Let us fix a   finite  field  $\mbb F_q$ of  $q$ elements. 
An $n$-step flag $F=(0=F_0\subseteq F_1\subseteq \cdots \subseteq  F_n=\mbb F_q^d)$ is called  of type $\mbf d\in \mbf P_d$  if 
$\dim F_i/F_{i-1}= d_i$ for any $i=1, \cdots, n$.
Let $\F_{\mbf d}$ be the set of all partial flags of type $\mbf d$. When the vector space $\mbb F_q^d$ is replaced by a $d$ dimensional vector space, say $D$, we write 
$\F_{\mbf d}(D)$ for the set of all $n$-step partial flags in $D$ of type $\mbf d$.  We set 
\[
\F = \sqcup_{\mbf d\in \mbf P_d} \F_{\mbf d}.
\]
When we want to emphasis the dimension $d$, we write $\F^d$ for $\F$.

Let $\GL(d) = \mrm{GL}(\mbb F_q^d)$ be the general linear group of rank $d$. It is clear that $\GL(d)$ acts transitively from the left on $\F_{\mbf d}$ for any $\mbf d\in \mbf P_d$. 
This action induces an action on the double  partial flag varieties $\F_{\mbf c}\times \F_{\mbf d} $ for any $(\mbf c,\mbf d)\in \mbf P_d\times \mbf P_d$ by 
acting diagonally. Let $\GL(d) \backslash \F_{\mbf c} \times \F_{\mbf d}$ be the set of $\GL(d)$-orbits. We have the bstection
\[
\GL(d) \backslash \F_{\mbf c} \times \F_{\mbf d} \to \Theta_d(\mbf c, \mbf d) 
\]
 given by 
\[
(V, F) \mapsto M=(m_{st}), \quad \mbox{where} \quad m_{st}= \dim \frac{V_{i-1}+V_i\cap F_j}{V_{i-1}+V_i\cap F_{j-1}}, \forall 1\leq i, j \leq n
\]
for any $(V, F)\in \F_{\mbf c}\times \F_{\mbf d}$. We shall denote by $O_M$ the $\GL(d)$ orbit corresponding to the matrix $M\in \Theta_d$ under this bstection.

\subsection{Lusztig's diagram} 

For a pair $(V, F) \in \F_{\mbf c}\times \F_{\mbf d}$ and a subspace $U$ in $D=\mbb F_q^d$, we write
$(V, F)\cap U$ for the resulting pair of flags  in $\F (U)\times \F(U) $ obtained after intersecting each step with $U$. 
We also write $(V, F)\cap \frac{D}{U}$ for the pair of flags obtained by passage to the quotient $D/U$.

Let $\mrm G(d'', d)$ be the Grassmannian of $d''$ dimensional subspace in a fixed $d$ dimensional vector space over $\mbb F_q$.

For triple pairs $\mbf c'', \mbf d'' \in \mbf P_{d''}$, $\mbf c', \mbf d' \in \mbf P_{d'}$, and   $ \mbf c, \mbf d \in \mbf P_{d}$ such that $\mbf c''+\mbf c'=\mbf c$ and $\mbf d''+\mbf d'=\mbf d$, we
consider the following diagram
\begin{equation}
\label{induction}
\begin{CD}
(\F_{\mbf c''}\times \F_{\mbf d''}) \times (\F_{\mbf c'}\times \F_{\mbf d'} )  @<\pi_1 <<  \mathcal E' @>\pi_2>> \mathcal E'' @>\pi_3>> \F_{\mbf c} \times \F_{\mbf d},
\end{CD}
\end{equation}
 where
 \begin{align*}
& \mathcal E'' = \{ (V, F; E)\in \F_{\mbf c} \times \F_{\mbf d} \times \mrm G(d'', d)  | (V, F) \cap E \in    (\F_{\mbf c''}\times \F_{\mbf d''})\},\\
&\mathcal E' =\{ (V, F; E; R', R'')| (V, F; E) \in \mathcal E'', R': \mbb F_q^d/ E \tilde{\to} \mbb F_q^{d'}, R'': E\tilde\to \mbb F_q^{d''}\},
 \end{align*}
 and the morphisms $\pi_2$ and $\pi_3$ are projections and 
 \[
 \pi_1(V, F; E; R', R'') = (R''((V, F) \cap E), R'( (V, F)\cap \frac{\mbb F_q^d}{E}) ).
 \]
 
 We emphasis that the left end of the above diagram is in opposite direction to that of the usual Lusztig's diagram.
 
  Note that, if defined over an algebraic closure of $\mbb F_q$,  $\pi_1$ is a smooth morphism of fiber  dimension 
 \begin{equation}
 \label{pi1}
 f_1= \sum_{i<j} (c_i'c_j'' + d_i' d_j'') + d'd'' +d'd' + d'' d''.
 \end{equation}
 The morphism $\pi_2$ is a $\GL(d')\times \GL(d'')$ principal bundle, hence the fiber dimension is 
 \begin{equation}
 \label{pi2}
 f_2=d'd' +d'' d''.
 \end{equation}
 The morphism $\pi_3$ is a compactifiable morphism, though not proper in general.  Recall that a compactifiable morphism $f$ is a morphism that can be written as $f_1f_2$ where $f_2$ is an open embedding while $f_1$ is proper.

\section{Bialgebra $(\K_{\q}(n), \circ, \tilde \Delta)$}

\subsection{Multiplication}
\label{mult}

Let 
\[
\K_{\q}(n) =\oplus_{d\in \mbb N} \K_{\q,d}(n) , \quad 
\K_{\q, d}(n) =\mrm{span}_{\C} \{ 1_{M}| M\in \Theta_d\},
\]
where $1_M$ is the characteristic function of the orbit $O_M$.
We define 
\begin{equation}
\label{twist}
1_{M''}\circ 1_{M'} = (q^{-\frac{1}{2}})^{f_1-f_2} \pi_{3!} \pi_{2\flat} \pi_1^* (1_{M''}\otimes 1_{M'}),
\end{equation}
where $f_1$ and $f_2$ are from (\ref{pi1}) and (\ref{pi2}), respectively,
and the notations $\pi_1^*$, etc are given in Section ~\ref{operations}.
Note that 
\[
f_1-f_2=\sum_{i<j} (c_i'c_j'' + d_i' d_j'') + d'd'' ,
\]
where $(\ro (M''), \co (M'') )= (\mbf c'', \mbf d'')$ and $(\ro(M'), \co (M')) =(\mbf c', \mbf d')$. 
Note that the operation $\circ$ can be  extended  to a linear map
\[
\circ: \K_{\q, d'}(n) \otimes \K_{\q, d''}(n) \to \K_{\q, d'+d''}(n).
\]

\begin{prop}
\label{associative}
The multiplication $``\circ"$ is associative.
\end{prop}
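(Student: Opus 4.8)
The plan is to prove associativity by writing both $(1_{M'''}\circ 1_{M''})\circ 1_{M'}$ and $1_{M'''}\circ(1_{M''}\circ 1_{M'})$ as push-pull integrals over a single common ``triple-flag'' correspondence and checking that the two iterated constructions factor through it in compatible ways. Concretely, I would introduce the three-step analogue of diagram \eqref{induction}: a space $\widetilde{\mathcal E}$ parametrizing data $(V,F;E_1\subseteq E_2)$ where $E_1,E_2\in\mrm G(\bullet,d)$ are nested subspaces realizing a decomposition of the ambient space into three graded pieces of dimensions $d',d'',d'''$, together with the trivializations $R',R'',R'''$ needed to land in the product of three double-flag varieties. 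Associativity then amounts to the statement that this $\widetilde{\mathcal E}$ can be reached by iterating the two-step construction in either grouping, and that the resulting composite maps agree up to isomorphism of correspondences.

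The key steps, in order, are as follows. First I would spell out the two iterated correspondences explicitly, obtaining for each grouping a large diagram built by fibering diagram \eqref{induction} over itself. Second, I would exhibit a common smooth cover $\widetilde{\mathcal E}$ dominating both, so that each side becomes a push-pull along $\widetilde{\mathcal E}$; here the principal-bundle property of the analogue of $\pi_2$ (a $\GL(d')\times\GL(d'')\times\GL(d''')$-bundle) and the smoothness of the analogue of $\pi_1$ let me replace $\pi_{2\flat}$ and $\pi_1^*$ by their three-step versions without ambiguity. Third, I would invoke the standard compatibilities of the operations from Section \ref{operations} under composition and base change, namely $(fg)^*=g^*f^*$, $(fg)_!=f_!g_!$, the proper-versus-open factorization that makes $\pi_{3!}$ well behaved for compactifiable morphisms, and base-change isomorphisms $f_!\,g^*\cong g'^*\,f'_!$ across Cartesian squares; these identities reduce both iterated expressions to the same integral over $\widetilde{\mathcal E}$. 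Fourth, I would verify that the normalizing powers of $\q$ match: the exponents $f_1-f_2$ accumulated in each grouping must sum to the same total, which follows because the formula $f_1-f_2=\sum_{i<j}(c_i'c_j''+d_i'd_j'')+d'd''$ is additive in the correct bookkeeping of the three graded dimensions and their row/column data.

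I expect the main obstacle to be the base-change step combined with the normalization check, rather than the formal functoriality. The delicate point is that $\pi_3$ is only compactifiable and not proper, so I cannot freely commute $\pi_{3!}$ past the other operations without first arguing that the relevant fibers have uniformly sized preimages (so that the $\flat$-normalization is consistent) and that the open-embedding part of each $\pi_3$ matches across the two groupings. Verifying that the two accumulated $\q$-exponents literally coincide will require a careful but mechanical count of the cross-terms $c_i'c_j''$ and $d_i'd_j''$ among the three pieces, checking that grouping the sum as $(d',d'')$-then-$d'''$ versus $d'$-then-$(d'',d''')$ yields identical totals; this is where a sign or an off-by-one in the $\sum_{i<j}$ range would most plausibly hide, so I would treat that calculation as the crux of the argument.
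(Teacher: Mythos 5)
Your proposal is correct and takes essentially the same approach as the paper: the paper likewise reduces both groupings to the single common count of nested pairs $E_1\subset E_2\subset D$ with $(V,F)\cap E_1\in O_{M'''}$, $(V,F)\cap \frac{E_2}{E_1}\in O_{M''}$, $(V,F)\cap \frac{D}{E_2}\in O_{M'}$, and then checks that both accumulated twists equal the symmetric total $\sum_{i<j}(c_i'c_j''+c_i'c_j'''+c_i''c_j'''+d_i'd_j''+d_i'd_j'''+d_i''d_j''')+d'd''+d'd'''+d''d'''$. The paper merely carries this out more directly---since all operations are on complex-valued functions over finite sets, it evaluates both sides at a point $(V,F)$ and counts, so the base-change scaffolding and your concern about $\pi_3$ being only compactifiable (rather than proper) are moot at this level.
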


\begin{proof}
It suffices to show that 
\[
(1_{M'''}\circ 1_{M''}) \circ 1_{M'} = 1_{M'''} \circ (1_{M''} \circ 1_{M'}),
\]
for a triple of matrices ($M', M'', M''')$ whose corresponding orbits are in $\F_{\mbf c'}\times \F_{ \mbf d'}$, $\F_{\mbf c''}\times \F_{\mbf d''}$ and $\F_{\mbf c'''}\times \F_{\mbf d'''}$, respectively. It is straightforward to  check that the shifts on the left hand side and the right hand side are equal to 
\[
\sum_{i<j} c_i'c_j''+c_i'c_j'''+c_i''c_j'''+d_i'd_j'' +d_i'd_j''' +d_i'' d_j'''+ d'd''+d'd'''+d''d'''.
\]
Moreover, without twisted, we have the evaluations of 
$(1_{M'''} \circ 1_{M''}) \circ  1_{M'} $ and $1_{M'''} \circ   (1_{M''} \circ 1_{M'})$ at $(V, F)\in \F_{\mbf c} \times \F_{\mbf d}$
is equal to 
\begin{align*}
\#\{ E_1\subset E_2 \subset D | (V, F)\cap E_1 \in O_{M'''},
(V, F) \cap \frac{E_2}{E_1}\in O_{M''}, (V, F) \cap \frac{D}{E_2} \in O_{M'}\},
\end{align*}
where $D$ is a vector space over $\mbb F_q$ of dimension $d'+d''+d'''$.
From the above analysis, we see that the proposition follows.
\end{proof}

From the above proposition, we see that  the pair $(\K_{\q}(n), \circ)$ is an associative algebra. Note that the unique element, the zero matrix, in $\Theta_0$ is the unit of the algebra $\K_{\q}(n)$.

\subsection{Defining relations}
For each pair $(i, j)$, let $e_{st}$ be the element in $\Theta_1$ whose value at the entry $(i, j)$ is $1$ and zero elsewhere. 
It is clear that the corresponding orbit consists of  only a single point
$\F_{e_i}\times \F_{e_j}$ where the set $\{e_i| i=1,\cdots, n\} $ is the standard basis of $\mbb Z^n$.

Let $E_{st}$ be the characteristic function on the orbit $O_{e_{st}}=\F_{e_i}\times \F_{e_j}$.

We also set
\[
E_{st}^{(n)}=\frac{E_{st}^n}{[n]^!_{\q}}, \quad \forall n\in \mbb N.
\]
where
\[
[n]^!_{\q} = [1]_{\q} [2]_{\q} \cdots [n]_{\q}, \quad [m]_{\q} = \frac{(\q)^{m} -(\q)^{-m}}{\q -q^{-\frac{1}{2}}}.
\]
We also set
\[
\bin{m}{n}_{\q} = \frac{[m]^!_{\q}}{[n]^!_{\q} [m-n]_{\q}^!},\quad \forall m \geq  n \in \mbb N.
\]
\begin{lem}
\label{relation-1}
$E_{st}^{(n)} = 1_{n e_{st}}$, $E_{st} \circ E_{st}^{(n)} =[n+1]_{\q} E_{st}^{(n+1)}$ and 
$E_{st}^{(m)} \circ  E_{st}^{(n)} =\bin{m+n}{m}_{\q} E_{st}^{(m+n)}$. 
\end{lem}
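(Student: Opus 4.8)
The plan is to reduce all three identities to the single class of products $1_{m e_{st}}\circ 1_{n e_{st}}$, since by definition the divided powers $E_{st}^{(n)}$ are normalized $\circ$-powers of $E_{st}=1_{e_{st}}$. I would prove the first assertion $E_{st}^{(n)}=1_{n e_{st}}$ by induction on $n$ (the cases $n=0,1$ being immediate from $[0]^!_{\q}=[1]^!_{\q}=1$), using the second assertion $E_{st}\circ E_{st}^{(n)}=[n+1]_{\q}E_{st}^{(n+1)}$ as the inductive step; the third identity is then a direct evaluation of $1_{m e_{st}}\circ 1_{n e_{st}}$.

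The key observation is that the orbit $O_{n e_{st}}$ lives in $\F_{n e_i}\times \F_{n e_j}$, where both flags are forced to be concentrated in a single step and hence are \emph{trivial}. Consequently the conditions on $(V,F)$ in the definition of $\circ$ become vacuous, and by the untwisted count recorded in the proof of Proposition~\ref{associative}, the evaluation of $E_{st}\circ 1_{n e_{st}}$ at the unique pair $(V,F)\in\F_{(n+1)e_i}\times\F_{(n+1)e_j}$ reduces to counting lines $E\subset\mbb F_q^{n+1}$, giving $(q^{n+1}-1)/(q-1)=[n+1]_q$; likewise the evaluation of $1_{m e_{st}}\circ 1_{n e_{st}}$ reduces to counting $m$-dimensional subspaces of $\mbb F_q^{m+n}$, i.e.\ the Gaussian binomial $\binom{m+n}{m}_q$. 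I would then compute the twist exponent: since $\ro$ and $\co$ of these matrices are supported on the single index $i$ (resp.\ $j$), the sums $\sum_{a<b}(c'_a c''_b+d'_a d''_b)$ vanish, leaving $f_1-f_2=d'd''$, which equals $n$ in the first product and $mn$ in the second.

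Combining count with twist yields $E_{st}\circ 1_{n e_{st}}=(q^{-\frac12})^{n}[n+1]_q\,1_{(n+1)e_{st}}$ and $1_{m e_{st}}\circ 1_{n e_{st}}=(q^{-\frac12})^{mn}\binom{m+n}{m}_q\,1_{(m+n)e_{st}}$. The final step is a purely algebraic identity converting these one-sided quantities into the balanced quantum coefficients: from $[\ell]_{\q}=q^{-(\ell-1)/2}[\ell]_q$, where $[\ell]_q=(q^{\ell}-1)/(q-1)$, one obtains $[\ell]^!_{\q}=q^{-\binom{\ell}{2}/2}[\ell]^!_q$, whence $(q^{-\frac12})^{n}[n+1]_q=[n+1]_{\q}$, and, using $\binom{m+n}{2}-\binom{m}{2}-\binom{n}{2}=mn$, also $(q^{-\frac12})^{mn}\binom{m+n}{m}_q=\bin{m+n}{m}_{\q}$. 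These are exactly the claimed coefficients, so the induction closes and the third identity follows.

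I expect the main obstacle to be the bookkeeping of the half-integer twist: one must check that the power of $q^{-\frac12}$ produced by $f_1-f_2$ precisely transforms the one-sided Gaussian binomial $\binom{m+n}{m}_q$ into the symmetric coefficient $\bin{m+n}{m}_{\q}$. Everything else is routine once one notices that the trivial flags render the subspace conditions automatic, collapsing the geometry to a plain count of subspaces over $\mbb F_q$.
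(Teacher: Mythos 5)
Your proposal is correct and is essentially the paper's own argument: the paper's one-line proof rests precisely on the identity $1_{e_{st}} \circ 1_{ne_{st}}=[n+1]_{\q} 1_{(n+1)e_{st}}$ ``calculated by definition,'' which is exactly what you carry out — the trivial-flag observation collapsing the geometry to a subspace count, times the twist $(q^{-\frac{1}{2}})^{d'd''}$ — followed by the same induction. Your explicit treatment of $1_{me_{st}}\circ 1_{ne_{st}}$ and the conversion from Gaussian binomials to the balanced coefficients $\bin{m+n}{m}_{\q}$ just fills in details the paper leaves to the reader.
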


This lemma is due to the fact that $1_{e_{st}} 1_{ne_{st}}=[n+1]_{\q} 1_{(n+1)e_{st}}$, which can be calculated by definition.

\begin{lem}
\label{relation-a}
$E_{ik} \circ E_{jl}= E_{jl} \circ E_{ik} $, for any $i>j$, $k<l$.
\end{lem}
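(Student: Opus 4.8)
The plan is to evaluate both products as functions on $\F_{\mbf c}\times\F_{\mbf d}$ with $\mbf c=e_i+e_j$ and $\mbf d=e_k+e_l$, and to show they agree pointwise and carry equal $\q$-twists. First I would exploit that the total dimension is $d=2$: since $i>j$ and $k<l$, a pair $(V,F)\in\F_{\mbf c}\times\F_{\mbf d}$ over a $2$-dimensional space $D$ is just a pair of lines, $V$ being determined by $L:=V_j$ and $F$ by $L':=F_k$ (all other nonzero steps being forced). So it suffices to compute both products at the configuration $(L,L')$ and to separate the cases $L=L'$ and $L\neq L'$.

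Next I would invoke the pointwise description underlying the proof of Proposition \ref{associative}: stripped of the $\q$-twist, the value of $1_{M''}\circ 1_{M'}$ at $(V,F)$ equals the number of subspaces $E\subset D$ with $\dim E=d''$, with $(V,F)\cap E\in O_{M''}$, and with $(V,F)\cap\frac{D}{E}\in O_{M'}$. For $E_{ik}\circ E_{jl}$, i.e. $M''=e_{ik}$ and $M'=e_{jl}$ (so $d'=d''=1$), I would translate the orbit-membership conditions into incidence conditions on the line $E$: matching the row/column types of the restricted and the quotient flags forces $E=L'$ from the column data and $E\neq L$ from the row data. Hence the untwisted count is $1$ when $L\neq L'$ and $0$ when $L=L'$. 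Running the same bookkeeping for $E_{jl}\circ E_{ik}$ instead forces $E=L$ and $E\neq L'$, yielding the identical count. Thus the two untwisted functions coincide.

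Finally I would verify that the two $\q$-twists agree. Using $f_1-f_2=\sum_{a<b}(c_a'c_b''+d_a'd_b'')+d'd''$ with $d'=d''=1$, the product $E_{ik}\circ E_{jl}$ acquires exponent $2$: the hypothesis $j<i$ contributes one to the $\mbf c$-sum while $k<l$ contributes nothing to the $\mbf d$-sum, and $d'd''=1$. Symmetrically, $E_{jl}\circ E_{ik}$ also acquires exponent $2$, but now the $\mbf c$-sum vanishes and the $\mbf d$-sum contributes one. Since both twists equal $(q^{-\frac12})^2$, the twisted functions agree as well, which establishes the lemma.

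I expect the main difficulty to be bookkeeping rather than conceptual: keeping the $\ro/\co$ conventions straight when reading off the types of the restricted and quotient flags, and checking that the orbit conditions genuinely pin down $E$ uniquely. The one genuinely asymmetric point, worth isolating, is that the inequality $i>j$ feeds the $\mbf c$-contribution to the twist in one product whereas the inequality $k<l$ feeds the $\mbf d$-contribution in the other; it is exactly this interchange that makes the two exponents coincide, so that no power of $\q$ survives and the relation is genuine commutation.
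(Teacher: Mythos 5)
Your proposal is correct and follows essentially the same route as the paper's proof: evaluate both products pointwise on the two orbits $O_{e_{jk}+e_{il}}$ ($V_j=F_k$) and $O_{e_{jl}+e_{ik}}$ ($V_j\neq F_k$), observe that the admissible lines $E$ are pinned down to $F_k$ in one product and to $V_j$ in the other (each count being $1$ or $0$ in the two cases), and check that both twists equal $(q^{-\frac{1}{2}})^2$. Your explicit bookkeeping of which inequality ($i>j$ versus $k<l$) feeds which sum in $f_1-f_2$ is in fact cleaner than the paper's terse statement at that point.
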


\begin{proof}
The corresponding pair $(\mbf {c, d}) $ in this situation is $( e_i+e_j, e_k+e_l)$. 
The set $\F_{\mbf c}\times \F_{\mbf d}$ has two orbits corresponding to the matrices
$e_{jk}+e_{il}$ and $e_{jl}+e_{ik}$.
By definition, we have, for any $(V, F)\in \F_{\mbf c}\times \F_{\mbf d}$,
\begin{equation}
\label{formalism}
 E_{jl}E_{ik} = (q^{-\frac{1}{2}})^{f_1-f_2} \#X, \quad
 E_{ik}E_{jl} =(q^{-\frac{1}{2}})^{f_1-f_2} \#Y,
\end{equation}
where 
\begin{align*}
X &= \{ E\in \mrm G(1, 2) |  ( V, F) \cap E\in O_{e_{jl}}, (V, F) \cap \frac{\mbb F_q^2}{E}  \in O_{e_{ik}}\},
\\
Y &=\{ E\in \mrm G(1, 2) |  ( V, F) \cap E \in O_{e_{ik}}, (V, F) \cap \frac{\mbb F_q^2}{E}  \in O_{e_{jl}}\}.
\end{align*}

In both cases, $f_1+f_2=2$.  
If  $(V, F)\in O_{e_{jk}+e_{il}}$, we have $V_j=F_k$. This implies that $X=Y=$\O.
If $(V, F)\in O_{e_{jl}+e_{ik}}$, we have $V_j\neq F_k$.  This implies that 
$X=\{E| E=V_j\} $ and $Y=\{E| E=F_k\}$. So we have $\# X = \# Y =1$.  Therefore, the lemma holds.
\end{proof}

\begin{lem}
$E_{ik}\circ E_{jl}=E_{jl}\circ E_{ik} + (\q- q^{-\frac{1}{2}}) E_{jk}\circ E_{il}$, for any  $ i>j$, $k>l$.
\end{lem}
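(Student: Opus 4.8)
The plan is to follow the template of the previous lemma, evaluating both sides of the identity at a representative of each orbit in $\F_{\mbf c}\times\F_{\mbf d}$, where now $\mbf c = e_i+e_j$, $\mbf d = e_k+e_l$ and $d=2$. This double flag variety again carries exactly two orbits, this time $O_{e_{ik}+e_{jl}}$ and $O_{e_{il}+e_{jk}}$. A pair $(V,F)$ here is recorded by the single line $V_j\subseteq\mbb F_q^2$ coming from $V$ together with the single line $F_l$ coming from $F$, and a short evaluation of the defining formula for $m_{st}$ shows that $V_j=F_l$ singles out $O_{e_{ik}+e_{jl}}$ while $V_j\neq F_l$ singles out $O_{e_{il}+e_{jk}}$. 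Since each of the three products in the claim is a $\GL(2)$-invariant function, it suffices to verify the identity after evaluating at one $(V,F)$ with $V_j=F_l$ and at one with $V_j\neq F_l$.

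Next I would expand each of $E_{ik}\circ E_{jl}$, $E_{jl}\circ E_{ik}$ and $E_{jk}\circ E_{il}$ through (\ref{twist}), so that each becomes a cocycle twist $(q^{-\frac{1}{2}})^{f_1-f_2}$ times the number of lines $E\in\mrm G(1,2)$ for which the subspace part $(V,F)\cap E$ and the quotient part $(V,F)\cap\frac{\mbb F_q^2}{E}$ lie in the prescribed single-point orbits. The decisive new feature compared with the previous lemma is that the three twists are no longer equal: writing $f_1-f_2=\sum_{a<b}(c'_a c''_b+d'_a d''_b)+d'd''$ and using that here $i>j$ and $k>l$, the inversion terms appear asymmetrically and one obtains exponents $3$, $1$ and $2$ for $E_{ik}\circ E_{jl}$, $E_{jl}\circ E_{ik}$ and $E_{jk}\circ E_{il}$ respectively (in the previous lemma each product picked up exactly one inversion, which is why those two twists agreed). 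I expect this careful bookkeeping of the three distinct twists to be the main point of the argument, since it is precisely this asymmetry that forces the factor $\q-q^{-\frac{1}{2}}$ to appear.

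It then remains to perform the counts. For a line $E$, the orbit types of $(V,F)\cap E$ and of $(V,F)\cap\frac{\mbb F_q^2}{E}$ are governed by whether $E$ equals $V_j$ and whether $E$ equals $F_l$: on the subspace the row index is the smaller one exactly when $E=V_j$ and the column index is the smaller one exactly when $E=F_l$, while on the quotient the opposite occurs. Translating the four orbit constraints of each product into conditions of the form $E=V_j$ or $E\neq V_j$ and $E=F_l$ or $E\neq F_l$, one reads the cardinalities off directly: on the orbit $V_j=F_l$ the three counts are $q$, $1$, $0$, and on the orbit $V_j\neq F_l$ they are $q-1$, $0$, $1$. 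Substituting into the twisted products and using $q^{-\frac{3}{2}}\,q=q^{-\frac{1}{2}}$ together with $(\q-q^{-\frac{1}{2}})q^{-1}=q^{-\frac{1}{2}}-q^{-\frac{3}{2}}=q^{-\frac{3}{2}}(q-1)$ verifies the identity on both orbits, finishing the proof. The only delicate point beyond the twist computation is matching the subspace/quotient roles of (\ref{twist}) to the order of the two factors, exactly as fixed in the previous lemma.
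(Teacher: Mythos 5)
Your proposal is correct and takes essentially the same approach as the paper: the paper's proof consists precisely of the data you derive, namely the three shifts $3$, $1$, $2$ for $E_{ik}\circ E_{jl}$, $E_{jl}\circ E_{ik}$, $E_{jk}\circ E_{il}$ and the orbit-by-orbit counts $(q,1,0)$ on $O_{e_{jl}+e_{ik}}$ and $(q-1,0,1)$ on $O_{e_{jk}+e_{il}}$, recorded there in a table, from which the identity is verified on each orbit exactly as you do. The only difference is presentational: you spell out the line-counting conditions ($E=V_j$ versus $E\neq V_j$, $E=F_l$ versus $E\neq F_l$) that the paper leaves implicit.
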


\begin{proof}
In this case, the pair $(\mbf { c, d})$ is still $(e_i+e_j, e_k+e_l)$. The following diagram is either the shift $f_1-f_2$ or the value of the function in the column at the given point in each row:

\begin{center}
\begin{tabular}{|l |l|c|c|c|c|} \hline
 {\bf }    & {\bf $E_{jl} E_{ik} $ }  & {\bf $E_{ik}E_{jl}$ } & $E_{jk} E_{il}$   \\ \hline \hline

$f_1-f_2$ & 1 &3  &2\\ \hline\hline
$(V, F)\in O_{e_{jl}+ e_{ik}}$ &1 &$q$ &0\\ \hline\hline

$(V, F)\in O_{e_{jk}+e_{il}}$ &0  & $q-1$ &1\\ \hline
                              
\end{tabular}
\end{center}
From the data in the above diagram, we see that the lemma follows.
\end{proof}

\begin{lem}
$E_{ik}\circ E_{il}=\q E_{il} \circ E_{ik}$, for any  $k>l$.
\end{lem}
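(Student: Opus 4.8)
The plan is to follow exactly the same geometric computation that proved the previous two lemmas, since the situation here is a single-row relation. First I would identify the relevant pair $(\mbf c,\mbf d)$: because both $E_{ik}$ and $E_{il}$ involve the same row index $i$, the relevant data is $(\mbf c,\mbf d)=(2e_i,\,e_k+e_l)$ with $k>l$. Here the row-type $\mbf c$ has a repeated entry, so there is only one flag $V$ up to the group action on that side, while $F$ ranges over flags of type $e_k+e_l$. I would enumerate the $\GL(d)$-orbits in $\F_{2e_i}\times\F_{e_k+e_l}$; these correspond to matrices in $\Theta_2(2e_i,\,e_k+e_l)$, and since all the mass in rows must sit in row $i$, the only matrix is $2e_{i?}$-type, namely the single matrix $e_{ik}+e_{il}$. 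Thus there is a \emph{unique} orbit, which is the key structural simplification driving the $v$-commutation rather than a genuine three-term relation.

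Next I would write down, as in the proof of Lemma~\ref{relation-a}, the two counting sets
\begin{align*}
X &= \{ E\in \mrm G(1,2)\mid (V,F)\cap E\in O_{e_{il}},\ (V,F)\cap \tfrac{\mbb F_q^2}{E}\in O_{e_{ik}}\},\\
Y &= \{ E\in \mrm G(1,2)\mid (V,F)\cap E\in O_{e_{ik}},\ (V,F)\cap \tfrac{\mbb F_q^2}{E}\in O_{e_{il}}\},
\end{align*}
so that $E_{il}\circ E_{ik}=(\q)^{f_1-f_2}\#X$ and $E_{ik}\circ E_{il}=(\q)^{f_1-f_2}\#Y$, where in the defining relation for $E_{ik}\circ E_{il}$ the first factor $E_{ik}$ gets intersected with the quotient and $E_{il}$ with the subspace (per the associativity formula's convention where the right factor lands in the subquotient nearer the subspace $E$). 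I would then compute both shifts $f_1-f_2=\sum_{r<s}(c_r'c_s''+d_r'd_s'')+d'd''$ explicitly: for the split where one single point of type $e_{il}$ sits in $E$ and the type-$e_{ik}$ point in the quotient, and for the reverse split. The expectation is that the two shifts differ by exactly $1$, producing the factor $\q$ relating the two products, while the raw cardinalities $\#X$ and $\#Y$ are \emph{equal} (both equal to $1$, or both equal to the same count) because over the unique orbit the flag $V=V_i$ is fixed and the only freedom in choosing the line $E\in\mrm G(1,2)$ is constrained identically in the two cases.

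The cleanest route is to reproduce the tabular bookkeeping used in the $i>j$, $k>l$ lemma, now with a single orbit row. Concretely I would build a small table listing, for the one orbit $O_{e_{ik}+e_{il}}$, the shift $f_1-f_2$ for each of $E_{il}\circ E_{ik}$ and $E_{ik}\circ E_{il}$ together with the unnormalized count, and read off $E_{ik}\circ E_{il}=\q\,E_{il}\circ E_{ik}$ directly. The main obstacle I anticipate is purely the careful combinatorics of the quantum shift: I must get the exponents $f_1-f_2$ right in each ordering, tracking which factor is assigned to the sub and which to the quotient, since the asymmetry between $\sum_{r<s}$ with $k>l$ is precisely what creates the single power of $q^{1/2}$. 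Once the shift difference is confirmed to be $1$ and the geometric counts are seen to coincide (which follows because, with $V$ forced to be a single flag and $\mbb F_q^2$ two-dimensional, each set $X$ and $Y$ consists of exactly one admissible line), the relation $E_{ik}\circ E_{il}=\q\,E_{il}\circ E_{ik}$ follows immediately.
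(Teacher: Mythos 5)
Your framing is right as far as it goes---the pair $(\mbf c,\mbf d)=(2e_i,e_k+e_l)$, the unique orbit $O_{e_{ik}+e_{il}}$, and the plan of comparing twists and raw counts all match the paper---but the central quantitative claim of your proposal is false, and it would deliver the lemma with the factor on the wrong side. The two raw counts are \emph{not} equal. Let $L$ denote the unique line $F_l=\cdots=F_{k-1}$ of a flag $F$ of type $e_k+e_l$ (recall $k>l$). In $1_{M''}\circ 1_{M'}$ the \emph{left} factor $M''$ is the one assigned to the subspace $E$ (this is forced by the definition of $\pi_1$; your parenthetical remark that the first factor goes to the quotient has this backwards, even though your displayed formulas for $X$ and $Y$ happen to use the correct convention). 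For $E_{il}\circ E_{ik}$ the subspace condition $(V,F)\cap E\in O_{e_{il}}$ forces $E\subseteq F_l$, i.e.\ $E=L$, so $\#X=1$. For $E_{ik}\circ E_{il}$ the subspace condition $(V,F)\cap E\in O_{e_{ik}}$ forces $F_{k-1}\cap E=0$, i.e.\ $E\neq L$, and the quotient condition imposes the same constraint; hence $\#Y=q$, the number of lines in $\mbb F_q^2$ distinct from $L$. So the counts are $1$ and $q$, not ``both exactly one admissible line.''

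The shifts are $f_1-f_2=1$ for $E_{il}\circ E_{ik}$ and $f_1-f_2=2$ for $E_{ik}\circ E_{il}$ (the extra unit coming from $\sum_{r<s}d'_rd''_s$ because $l<k$), so
\[
E_{il}\circ E_{ik}=(q^{-\frac{1}{2}})^{1}\cdot 1\cdot 1_{e_{ik}+e_{il}},
\qquad
E_{ik}\circ E_{il}=(q^{-\frac{1}{2}})^{2}\cdot q\cdot 1_{e_{ik}+e_{il}},
\]
and the ratio is $q^{-\frac{1}{2}}\cdot q=\q$, which is the lemma. Had your claim been true---equal counts with shifts differing by one---the conclusion would be $E_{ik}\circ E_{il}=q^{-\frac{1}{2}}\,E_{il}\circ E_{ik}$, i.e.\ the inverse relation. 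The point you are missing is that this lemma, unlike Lemma~\ref{relation-a} (where the two orderings have equal shifts \emph{and} equal counts, whence genuine commutativity), works by an interplay: the larger twist attached to $E_{ik}\circ E_{il}$ is overcompensated by the larger geometric count $q$, and it is precisely this mismatch that produces the $v$-commutation. A minor additional slip: the twist in the definition of $\circ$ is $(q^{-\frac{1}{2}})^{f_1-f_2}$, not $(\q)^{f_1-f_2}$ as written in your formulas.
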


\begin{proof}
The pair $(\mbf {c, d})$ is $(2e_i, e_k+e_l)$ in this case. In this case, the set $\F_{2e_i}\times \F_{e_k+e_l}$ has only one $\GL(2)$-orbit corresponding to $e_{ik}+e_{il}$.   We have the following data:
\begin{center}
\begin{tabular}{|l |l|c|c|c|} \hline

 {\bf }  & {\bf $E_{il} E_{ik} $ } 
 & {\bf $E_{ik}E_{il}$ }     \\ \hline \hline

$f_1-f_2$ & 1 &2 \\ \hline\hline
$(V, F)\in O_{e_{ik}+e_{il}}$ &1 &$q$\\ \hline

\end{tabular}
\end{center}
The lemma follows from the above data.
\end{proof}

\begin{lem}
\label{relation-d}
$E_{ik}\circ E_{jk} =\q E_{jk}\circ E_{ik}$, for any $i>j$.
\end{lem}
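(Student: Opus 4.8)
The plan is to follow exactly the template established in the four preceding lemmas: fix the pair $(\mbf c,\mbf d)$, enumerate the $\GL(d)$-orbits on $\F_{\mbf c}\times\F_{\mbf d}$, record the two shifts $f_1-f_2$, and then count the intermediate lines $E$ over a single orbit representative. Here the relevant pair is $(\mbf c,\mbf d)=(e_i+e_j,\,2e_k)$, so we work inside a two-dimensional space $D=\mbb F_q^2$. The decisive structural observation is that the column type is the \emph{concentrated} vector $2e_k$: a flag of type $2e_k$ must have $F_k=D$ (and $F_{k-1}=0$) and is therefore unique, while a flag of type $e_i+e_j$ with $i>j$ is determined by the single line $V_j=L\subseteq D$. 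Consequently $\GL(2)$ acts transitively on $\F_{e_i+e_j}\times\F_{2e_k}$ and there is exactly one orbit, namely $O_{e_{ik}+e_{jk}}$. This is the mirror image of the lemma $E_{ik}\circ E_{il}=\q\,E_{il}\circ E_{ik}$, in which the row type was concentrated instead, and I expect the two statements to be symmetric under interchanging $\ro$ and $\co$.

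Next I would read off the two shifts from $f_1-f_2=\sum_{a<b}(c'_a c''_b+d'_a d''_b)+d'd''$, with $d'=d''=1$ throughout. For $E_{jk}\circ E_{ik}$ one has $(\mbf c'',\mbf d'')=(e_j,e_k)$ and $(\mbf c',\mbf d')=(e_i,e_k)$, giving shift $0+0+1=1$; for $E_{ik}\circ E_{jk}$ one has $(\mbf c'',\mbf d'')=(e_i,e_k)$ and $(\mbf c',\mbf d')=(e_j,e_k)$, giving shift $1+0+1=2$ (the first term is $1$ because $j<i$). These are the entries of the $f_1-f_2$ row of the usual table.

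The one genuine computation is to count, over a representative $(V,F)\in O_{e_{ik}+e_{jk}}$, the lines $E\in\mrm G(1,2)$ realizing each factorization, and everything hinges on the dichotomy $E=L$ versus $E\neq L$. Intersecting with $E$ and passing to $D/E$, I would verify that when $E=L$ the induced data lie in $O_{e_{jk}}$ on $E$ and in $O_{e_{ik}}$ on $D/E$, whereas when $E\neq L$ the two roles are exchanged (on $E$ one lands in $O_{e_{ik}}$, on $D/E$ in $O_{e_{jk}}$); the flag $F$ always contributes the index $k$ since $F_k=D$, $F_{k-1}=0$. Hence the count for $E_{jk}\circ E_{ik}$ is $1$ (only $E=L$ qualifies) and that for $E_{ik}\circ E_{jk}$ is $q$ (the remaining $q$ lines). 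Recording these as the last row of the table, the twists convert the raw counts into $E_{ik}\circ E_{jk}=(q^{-\frac{1}{2}})^{2}\,q\,1_{e_{ik}+e_{jk}}=1_{e_{ik}+e_{jk}}$ and $E_{jk}\circ E_{ik}=(q^{-\frac{1}{2}})\,1_{e_{ik}+e_{jk}}$, whence $E_{ik}\circ E_{jk}=\q\,E_{jk}\circ E_{ik}$.

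The main obstacle is not conceptual but one of bookkeeping: one must keep straight which of the two flags is varying and compute the quotient-flag type correctly, since a mis-assignment in the $E=L$ versus $E\neq L$ analysis would swap the counts $1$ and $q$ and spoil the power of $\q$. As a consistency check I would confirm that the resulting powers are matched by the companion lemma under the transpose $M\mapsto M^{t}$, $\ro\leftrightarrow\co$, which carries this relation to $E_{ik}\circ E_{il}=\q\,E_{il}\circ E_{ik}$.
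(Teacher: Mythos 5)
Your proposal is correct and follows essentially the same route as the paper's proof: identify the pair $(\mbf c,\mbf d)=(e_i+e_j,2e_k)$, observe there is a single orbit $O_{e_{ik}+e_{jk}}$, compute the shifts $f_1-f_2$ as $1$ and $2$, and count $1$ versus $q$ lines via the $E=V_j$ versus $E\neq V_j$ dichotomy. The paper merely records these values in a table, whereas you spell out the line-counting and the final comparison of twists explicitly; the conclusions agree entry for entry.
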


\begin{proof}
The pair $(\mbf {c, d})$ in this case is $(e_i+e_j, 2e_k)$. The set $\F_{e_i+e_j}\times\F_{ 2e_k}$ 
has only one $\GL(2)$-orbit corresponding to $e_{ik}+e_{jk}$. 
\begin{center}
\begin{tabular}{|l |l|c|c|c|} \hline

 {\bf }  & {\bf $E_{jk} E_{ik} $ }   & {\bf $E_{ik}E_{jk}$ }    \\ \hline \hline

$f_1-f_2$ & 1 &2 \\ \hline\hline
$(V, F)\in O_{e_{ik}+e_{jk}}$ &1 &$q$\\ \hline

\end{tabular}
\end{center}
The lemma follows from the above diagram.
\end{proof}

From Lemmas \ref{relation-a}-\ref{relation-d}, we see that 

\begin{prop}\label{defining}
Under the multiplication ``$\circ$'', 
the functions $E_{ij}$ in $\K_{\q}(n)$, for any $1\leq i, j\leq n$, satisfy the defining relations for 
the quantum matrix algebra $A_v(n)$ in Section ~\ref{qmatrix} for  $v=\q$.
\end{prop}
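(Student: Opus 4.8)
The plan is to read the proposition off directly from the four preceding lemmas, since each defining relation of $A_v(n)$ listed in Section~\ref{qmatrix} matches one of them once we specialize $v=\q$. First I would line up the two lists. The commutation relation $E_{ik}E_{jl}=E_{jl}E_{ik}$ for $i>j$, $k<l$ is exactly the content of Lemma~\ref{relation-a}. The quantized commutator $E_{ik}E_{jl}=E_{jl}E_{ik}+(v-v^{-1})E_{jk}E_{il}$ for $i>j$, $k>l$ is the second lemma above, where one only has to observe that $v-v^{-1}=\q-q^{-\frac{1}{2}}$ after setting $v=\q$. The two remaining relations $E_{ik}E_{il}=v\,E_{il}E_{ik}$ (for $k>l$) and $E_{ik}E_{jk}=v\,E_{jk}E_{ik}$ (for $i>j$) are the third lemma and Lemma~\ref{relation-d}, again with $v=\q$.

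Since these four families exhaust the defining relations of $A_v(n)$, and the product $\circ$ is already known to be associative by Proposition~\ref{associative}, nothing further is needed: the proposition is simply the assembly of Lemmas~\ref{relation-a}--\ref{relation-d}. The only point requiring a moment's bookkeeping is to check that the index ranges $i>j$, $k<l$ and $k>l$ appearing in the lemmas together account for every instance of the relations as written for $1\leq i,j\leq n$, and that no relation has been dropped.

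Accordingly, I do not expect any genuine obstacle at this stage; the substantive work was discharged inside the individual lemmas. In each of those the argument reduced a product of characteristic functions to an orbit count on the two-step Grassmannian $\mrm G(1,2)$, with the powers of $\q$ supplied by the normalizing shift $(q^{-\frac{1}{2}})^{f_1-f_2}$ in~(\ref{twist}). The single delicate case was the second lemma, where $E_{ik}\circ E_{jl}$ receives contributions from two distinct orbits, and the mismatch of the shifts $f_1-f_2$ across the rows of the displayed table is precisely what produces the correction term $(\q-q^{-\frac{1}{2}})\,E_{jk}\circ E_{il}$. For the proposition itself, the final step is merely to record that these lemma-level computations are, verbatim, the four defining relations.
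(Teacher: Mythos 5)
Your proposal is correct and takes essentially the same route as the paper, whose entire proof is to read the proposition off as the assembly of Lemmas~\ref{relation-a}--\ref{relation-d} (together with the already-established associativity), exactly as you do. The substantive verification indeed lives in those lemmas, not in the proposition itself.
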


\subsection{Generators}

We put
\begin{align}
\label{dM}
d(M) =\sum_{i, j, k, l} m_{st} m_{kl},
\end{align}
where $i$, $j$, $k$ and $l$ run from $1$ to $n$ and satisfy that either  $i<k$ or $j<l$. From ~\cite[ 2.2(b)] {BLM90},  $d(M)$ is the dimension of $O_M$ (if defined over an algebraically closed field).
We write
\[
E^{(M)} =\prod_{1\leq i, j\leq n} E_{st}^{(m_{st})},
\]
where the product is taken in the   lexicographic order ``$\leq$''  on the set $\{(i, j)| 1\leq i, j\leq n\}$. For example, if $n=2$, we have 
\[
E^{(M)} = E_{11}^{(m_{11})}E_{12}^{(m_{12})} E_{21}^{(m_{21})} E_{22}^{(m_{22})}  .
\]

\begin{prop}
\label{basis}
We have 
\begin{align}
\label{PBW}
E^{(M)} = (q^{-\frac{1}{2}})^{d(M)} 1_M ,  \quad \forall M\in \Theta_d.
\end{align}
In particular, the set $\{ E_{st}| 1\leq i, j\leq n\}$ generates the algebra $(\K_{\q}(n)$, $\circ)$. 
\end{prop}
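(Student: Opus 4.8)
The plan is to compute the iterated product directly, separating it into two independent pieces: the accumulated $q^{-1/2}$-shift and the underlying geometric count. By Lemma~\ref{relation-1} each divided power is a single orbit function, $E_{ij}^{(m_{ij})} = 1_{m_{ij}e_{ij}}$, so that
\[
E^{(M)} = 1_{m_{11}e_{11}}\circ 1_{m_{12}e_{12}}\circ\cdots\circ 1_{m_{nn}e_{nn}},
\]
the product being taken in lexicographic order on $(i,j)$. Writing out each $\circ$ via (\ref{twist}), the value of $E^{(M)}$ at a point $(V,F)\in\F_{\mbf c}\times\F_{\mbf d}$ is $(q^{-\frac12})^{T}$ times an integer $N(V,F)$, where $T$ is the total shift and $N(V,F)$ is the untwisted count produced by $\pi_{3!}\pi_{2\flat}\pi_1^*$. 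The goal is then to show that $N(V,F)=1$ precisely when $(V,F)\in O_M$ and vanishes otherwise, and that $T=d(M)$.

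For the count I would iterate the evaluation already carried out in the proof of Proposition~\ref{associative}: the untwisted value of the full product at $(V,F)$ equals the number of flags $0=E_0\subseteq E_1\subseteq\cdots\subseteq D$ for which the induced pair $(V,F)\cap E_k/E_{k-1}$ on each successive subquotient lies in $O_{m_{ij}e_{ij}}$, the index $(i,j)$ running through all pairs in lexicographic order from bottom to top. To evaluate this I would fix a basis of $D$ adapted to $(V,F)$, giving $D=\bigoplus_{i,j}D_{ij}$ with $\dim D_{ij}=m_{ij}$, $V_i=\bigoplus_{i'\le i,\,j'}D_{i'j'}$ and $F_j=\bigoplus_{i',\,j'\le j}D_{i'j'}$. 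Building the flag from the bottom, at the $k$-th step one passes to $D/E_{k-1}$ with $E_{k-1}=\bigoplus_{(i',j')<(i,j)}D_{i'j'}$; a bookkeeping of the surviving cells shows $\bar V_i\cap\bar F_j=\bar D_{ij}$ there, of dimension exactly $m_{ij}$. Since membership in $O_{m_{ij}e_{ij}}$ forces the $k$-th slice into $\bar V_i\cap\bar F_j$ with full dimension $m_{ij}$, that slice must equal $\bar D_{ij}$, so $E_k$ is determined and the flag is unique; hence $N(V,F)=1$. Conversely, any such flag assembles an adapted cell decomposition of $(V,F)$ with cell dimensions $m_{ij}$, forcing $(V,F)\in O_M$. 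Thus the untwisted product is $1_M$.

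For the shift I would sum the contributions $(q^{-\frac12})^{f_1-f_2}$ of the binary multiplications. An induction on the number of factors, whose base case is exactly the three-fold identity recorded in the proof of Proposition~\ref{associative}, shows that the total shift is the pairwise sum
\[
T=\sum_{\{L,H\},\,L<_{\mathrm{lex}}H}\Big(\sum_{i<j}\big(c^{H}_i c^{L}_j+d^{H}_i d^{L}_j\big)+m_H m_L\Big),
\]
where $H$ and $L$ denote the higher and lower of two slices, $(\mbf c^{\bullet},\mbf d^{\bullet})$ their row–column data, and $m_\bullet$ their total dimension. Substituting $\mbf c^{(ij)}=m_{ij}e_i$, $\mbf d^{(ij)}=m_{ij}e_j$ and $m_{(ij)}=m_{ij}$ collapses each pair term, and a short case analysis on whether the two cells are comparable or incomparable in the product order matches $T$ term by term with the definition (\ref{dM}) of $d(M)$. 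Together with the previous paragraph this yields $E^{(M)}=(q^{-\frac12})^{d(M)}1_M$.

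Finally, since the $1_M$ form a basis of $\K_{\q}(n)$ and each is, by the identity just proved together with Lemma~\ref{relation-1}, a scalar multiple of a $\circ$-product of the $E_{ij}$, the functions $E_{ij}$ generate the algebra. I expect the main obstacle to be the support-and-uniqueness claim: explaining precisely why each intermediate $E_k$ is rigidly determined and why no admissible flag exists off $O_M$. The delicate point is the computation $\bar V_i\cap\bar F_j=\bar D_{ij}$ in the successive quotients, which must be done carefully; by contrast the shift identity, though lengthy, is routine once the pairwise formula above is established by induction.
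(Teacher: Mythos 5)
Your proposal is correct and takes essentially the same route as the paper: the paper likewise splits the computation into the accumulated twist, shown to equal $d(M)$ by exactly your pairwise tally of the $\sum_{i<j}c_i'c_j''$, $\sum_{i<j}d_i'd_j''$ and $d'd''$ contributions, and the untwisted count of $n^2$-step flags with prescribed slice types, shown in Lemma~\ref{technical} to be $1$ on $O_M$ and $0$ elsewhere. The only difference is packaging: the paper establishes the rigidity by proving that any compatible flag must coincide with the associated flag $D_{st}=V_{i-1}+V_i\cap F_j$ (which yields uniqueness and the support claim simultaneously), whereas you fix an adapted cell decomposition of $(V,F)\in O_M$ and force the flag step by step, so the converse direction you flag as delicate is exactly the step the paper's associated-flag identification disposes of.
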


The proof of Proposition ~\ref{basis} will be given after the proof of Lemma ~\ref{technical}.

 An $n^2$-step partial flag 
\[
D_{\bullet}=\{ 0=D_{10} \subseteq D_{11} \subseteq \cdots \subseteq D_{1n} \subseteq D_{21}\subseteq \cdots \subseteq D_{2n}\subseteq \cdots \subseteq D_{nn}=D) \]
is called {\em of type $M\in \Theta_d$}  if  
\[
\dim D_{st}/D_{i,j-1} = m_{st}, \quad \forall 1\leq  i, j \leq n,
\]
where we set $D_{i0}=D_{i-1, n}$. Let $\F_M$ be the set of all  flags of type $M$.  
Given any pair $(V, F) \in \F_{\mbf c}\times \F_{\mbf d}$, where $O_M$ lies in,  we can associate an $n^2$-step flag by 
defining $D_{st} =V_{i-1} +V_i\cap F_j$ for any $1\leq i, j\leq n$. 
We say that
$(V, F) $ {\em is of type $M$} if the associated $n^2$-step flag is of type $M$.

 Let $\tF_M$ be the set of triples $(V, F, D_{\bullet})$, where $(V, F)\in \F_{\mbf c}\times \F_{\mbf d}$ and $D_{\bullet}\in \F_M$,  such that 
the pair   $(V, F) \cap \frac{D_{st}}{D_{i, j-1}}$ is of type $m_{st} e_{st}$, for any $1\leq i, j\leq n$. We may organize the above-mentioned set in the following diagram:
\[
\begin{CD}
\tF_M @>\pi_{  M} >> \F_{\mbf c}\times \F_{\mbf d} \\
@VpVV @.\\
\F_M
\end{CD}
\]
where the maps $\pi_M$ and $p$ are natural projections. 

\begin{lem}
\label{technical}
The morphism  $\pi_M$ is an injective map whose  image is  $O_M$. In other words, $\tF_M$ is isomorphic to $O_M$. 
Moreover,  the fibers of $p$ are   vector spaces of dimension $ \sum_{i>k, j<l} m_{st} m_{kl}$.
\end{lem}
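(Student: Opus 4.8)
The plan is to prove the two assertions separately: first that $\pi_M$ is a bijection onto $O_M$, and then that the fibres of $p$ are affine spaces of the stated dimension. Throughout I write $D^{\mrm{can}}_{ij} = V_{i-1} + V_i \cap F_j$ for the canonical refinement attached to a pair $(V,F)$, so that by construction $(V,F)\in O_M$ if and only if $D^{\mrm{can}}_{\bullet}$ is of type $M$.

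First I would unwind the defining condition of $\tF_M$. Since $m_{ij}e_{ij}$ is the only matrix with row sum $m_{ij}e_i$ and column sum $m_{ij}e_j$, the requirement that $(V,F)\cap \frac{D_{ij}}{D_{i,j-1}}$ be of type $m_{ij}e_{ij}$ is equivalent to the four conditions
\[
V_{i-1}\cap D_{ij}\subseteq D_{i,j-1}, \qquad D_{ij}=V_i\cap D_{ij}+D_{i,j-1},
\]
\[
F_{j-1}\cap D_{ij}\subseteq D_{i,j-1}, \qquad D_{ij}=F_j\cap D_{ij}+D_{i,j-1}.
\]
I would first show $D_{in}=V_i$ by induction on $i$ (the base $D_{0n}=D_{10}=0=V_0$ being trivial). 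Iterating the second condition down the column gives $D_{in}=V_i\cap D_{in}+D_{i0}=V_i\cap D_{in}+V_{i-1}\subseteq V_i$, using $D_{i0}=D_{i-1,n}=V_{i-1}$ by induction; since $\dim D_{in}=\sum_{k\le i}\sum_l m_{kl}=\dim V_i$ because $D_\bullet$ is of type $M$, equality follows, so $V$ is determined by $D_\bullet$. Next, for fixed $i$, iterating the fourth condition yields $D_{ij}=F_j\cap D_{ij}+V_{i-1}\subseteq V_{i-1}+V_i\cap F_j$. For the reverse inclusion I would take $x\in V_i\cap F_j$ with $x\notin V_{i-1}$, let $l_0$ be minimal with $x\in D_{i,l_0}$, and use the third condition $F_{l_0-1}\cap D_{i,l_0}\subseteq D_{i,l_0-1}$ to conclude $x\notin F_{l_0-1}$, whence $l_0\le j$ and $x\in D_{ij}$; thus $V_i\cap F_j\subseteq D_{ij}$ and $D_{ij}=D^{\mrm{can}}_{ij}$. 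This shows $D_\bullet$ is forced to be $D^{\mrm{can}}_\bullet$, giving injectivity of $\pi_M$ and showing $\pi_M(\tF_M)\subseteq O_M$. The reverse inclusion holds because for $(V,F)\in O_M$ the canonical refinement is of type $M$ and satisfies the four conditions above, the latter being the standard compatibility of the subquotients $\frac{V_i\cap F_j}{V_{i-1}\cap F_j+V_i\cap F_{j-1}}$. Hence $\pi_M$ is a bijection onto $O_M$.

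For the fibres of $p$, I would fix $D_\bullet\in\F_M$; by the above every $(V,F)\in p^{-1}(D_\bullet)$ has $V_i=D_{in}$, so $V$ is fixed and $p^{-1}(D_\bullet)$ is exactly the set of flags $F$ of type $\co(M)$ with $V_{i-1}+V_i\cap F_j=D_{ij}$ for all $i,j$. Choosing a basis $\{x_{ij}^s:1\le s\le m_{ij}\}$ adapted to $D_\bullet$, so that $D_{ij}=\mrm{span}\{x_{kl}^s:(k,l)\le(i,j)\}$ and $V_i=\mrm{span}\{x_{kl}^s:k\le i\}$, the column flag $F^{\mrm{can}}_j=\mrm{span}\{x_{kl}^s:l\le j\}$ lies in the fibre and serves as base point. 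I would then show that every $F$ in the fibre has a unique normal form
\[
F_j=\mrm{span}\bigl\{\, x_{il}^s+\textstyle\sum_{k<i,\,l'>l} a^{s,s'}_{(il),(kl')}\,x_{kl'}^{s'} : l\le j \,\bigr\},
\]
with the coefficients $a$ free. The key point is that a correction $x_{kl'}^{s'}$ with $k<i$ and $l'>l$ lies in $V_{i-1}$, so adding it leaves $V_{i-1}+V_i\cap F_j$ unchanged, yet it does not lie in $F^{\mrm{can}}_j$, so it genuinely deforms the flag; corrections into the remaining cells are either redundant or violate one of the four conditions. This identifies the fibre with the affine space on the parameters $a$, of dimension $\sum_{k<i,\,l'>l} m_{il}m_{kl'}=\sum_{i>k,\,j<l} m_{ij}m_{kl}$, as claimed.

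The main obstacle is this second part: verifying that the displayed normal form is unique and that the fibre conditions cut out precisely the upper-right correction parameters and no others. The bookkeeping that each of the four conditions is preserved exactly by the $k<i,\,l'>l$ corrections, together with the check that distinct parameters give distinct flags, is the technical heart. The resulting count is consistent with $\dim O_M-\dim\F_M=d(M)-\sum_{(i,j)<(k,l)}m_{ij}m_{kl}=\sum_{i>k,\,j<l}m_{ij}m_{kl}$, which serves as an independent cross-check via the isomorphism $\tF_M\cong O_M$.
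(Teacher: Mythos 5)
Your proposal is correct and follows essentially the same route as the paper's proof: both deduce $V_i = D_{in}$ inductively from the subquotient conditions, force $D_{\bullet}$ to coincide with the canonical refinement $D_{ij} = V_{i-1} + V_i\cap F_j$ (yielding injectivity of $\pi_M$ and the identification of the image with $O_M$), and parametrize the fibre of $p$ by corrections of cell-$(i,l)$ vectors into cells $(k,l')$ with $k<i$ and $l'>l$ --- your coordinate normal form is exactly the paper's basis-free description of the fibre as the vector space $T=\oplus_{j}\Hom\bigl(D_{ij}/D_{i,j-1},\,\oplus_{k<i,\,l>j} D_{kl}/D_{k,l-1}\bigr)$. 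The bookkeeping you flag as the technical heart is treated just as tersely in the paper itself, so nothing essential is missing relative to the published argument.
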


\begin{proof}

Suppose that $(V, F, D_{\bullet})$ is a triple in $\tF_M$. Then the fact that 
$(V, F) \cap \frac{D_{1j}}{D_{1, j-1}}$ is of type $m_{1j}e_{1j}$, for $1\leq j\leq n$,  implies that 
\[
\dim \frac{V_1\cap F_j}{D_{1, j-1} } =m_{1j}. 
\]
Hence, we have 
\[
V_1=D_{1n}.
\]
Inductively, we can prove that 
\[
V_i = D_{in}, \quad  \forall 1\leq i\leq n.
\]
The condition that $(V, F)\cap \frac{D_{st}}{D_{i, j-1}}$ is of type $m_{st}e_{st}$ implies that 
\begin{align}
\label{tech-1}
\dim \frac{F_j\cap D_{st}}{D_{i, j-1}} = m_{st}, \quad 
\dim \frac{F_{j-1}\cap D_{st} } {D_{i, j-1}} =0, \quad  \forall 1\leq i, j\leq n.
\end{align}
Fix  $D_{\bullet}$ (hence $V$), we see  from (\ref{tech-1}) that the choice of $F$ such that $(V, F, D_{\bullet})\in \tF_M$ is isomorphic to the vector space 
\[
T=\oplus_{j=1}^{n-1} \Hom (E_{st}, \oplus_{i>k, j<l} E_{kl}), \quad \mbox{where} \quad  E_{st} =D_{st}/D_{i, j-1}.
\]
From this, we see that $p$ is a vector bundle of the above-mentioned fiber dimension. 
Moreover, given any element $\phi=(\phi_j) \in T$, the corresponding flag $F=(F_j)$ satisfies that 
$F_j$ is the linear space spanned by $F_{j-1}$ and the elements $v_{st}+ \phi_j(v_{st})$ for any $v_{st}\in E_{st}$ for $i=1,\cdots, n$. 
In particular, we see that $E_{st} \subseteq D_{i-1, n} + D_{in}\cap F_j$. 
So we have 
\[
V_{i-1} + V_i\cap F_j = D_{i-1, n} + D_{i n} \cap F_j = D_{st}. 
\]
This shows that  the associated $n^2$-step flag for the pair $(V, F)$ in the triple $(V, F, D_{\bullet})$ is $D_{\bullet}$.
The lemma follows.
\end{proof}

We are ready  to prove Proposition ~\ref{basis}.
In the definition of multiplication `$\circ$' in (\ref{twist}), we have a twist $(q^{-\frac{1}{2}})^{f_1-f_2}$ where $f_1-f_2= \sum_{i<j} (c_i'c_j''+d_i'd_j'' )+d'd''$.  
The element $E^{(M)}$ is obtained by carrying out $(n^2-1)$ times of multiplications. 
Altogether, the term $\sum_{i<j} c_i'c_j''$ contributes nothing, while the terms $\sum_{i<j} d_i'd_j'' $ and $d'd''$ contribute 
$\sum_{i>k, j<l} m_{st} m_{kl}$ and $\sum_{(i,j)>(k,l)}m_{st} m_{kl} =\sum_{(i,j)<(k,l)} m_{st}m_{kl}$, respectively, where $(i,j) <(k, l) $ is the lexicographic order.
By (\ref{dM}) 
the twists $(q^{-\frac{1}{2}})^{f_1-f_2}$ is $(q^{-\frac{1}{2}})^{d(M)}$ in $E^{(M)}$. 

By the definition of the multiplication, we have 
\begin{align}
E^{(M)} = (q^{-\frac{1}{2}})^{d(M)} \pi_{M!} (1_{\tF_M}) =(q^{-\frac{1}{2}})^{d(M)} 1_M, 
\end{align}
where the second equality is due to  Lemma ~\ref{technical}. We see that (\ref{PBW}) follows.
This finishes the proof of Proposition ~\ref{basis}.

Note that the symbol $E^{(M)}$ is also meaningful in the algebra $A_{\q}(n)$ and, moreover, they form a basis for $A_{\q}(n)$ (\cite[Theorem 1.4]{NYM93}). 
By taking account of this fact,  Propositions ~\ref{associative}, ~\ref{defining} and ~\ref{basis}, we have the following  theorem.

\begin{thm}
\label{Phi}
The assignment of sending    $E_{st}$ in the quantized matrix algebra $A_{\q}(n)$ of 
Faddeev-Reshetikhin-Takhtajan defined in Section ~\ref{qmatrix} to the element in the  same notation  in $\K_{\q}(n)$ defines 
an isomorphism of associative  algebras:
\[
\Phi_q: A_{\q}(n) \to (\K_{\q}(n), \circ) .
\]
\end{thm}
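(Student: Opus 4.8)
The plan is to verify that the proposed assignment $E_{st}\mapsto E_{st}$ is a well-defined algebra homomorphism and then to upgrade it to an isomorphism by a dimension/basis count. For well-definedness I would invoke Proposition~\ref{defining}: the functions $E_{st}\in\K_{\q}(n)$ satisfy exactly the four families of Faddeev-Reshetikhin-Takhtajan relations (with $v=\q$) that define $A_{\q}(n)$. Since $A_{\q}(n)$ is presented by generators $E_{st}$ subject to precisely those relations, the universal property of a presentation gives a unique algebra homomorphism $\Phi_q:A_{\q}(n)\to(\K_{\q}(n),\circ)$ sending each generator to its namesake; associativity of the target (needed for this to make sense) is exactly Proposition~\ref{associative}. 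That settles the existence of $\Phi_q$ and makes it an algebra map by construction.

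Next I would prove surjectivity and injectivity together via the PBW-type bases on both sides. On the geometric side, Proposition~\ref{basis} shows that the ordered products $E^{(M)}=\prod E_{st}^{(m_{st})}$ equal $(q^{-\frac12})^{d(M)}1_M$, and since the $\{1_M\mid M\in\Theta_d\}$ are by definition a $\C$-basis of $\K_{\q,d}(n)$, the rescaled elements $\{E^{(M)}\}$ also form a basis of $(\K_{\q}(n),\circ)$. In particular the $E_{st}$ generate the target, so $\Phi_q$ is surjective. For injectivity I would use the cited fact (\cite[Theorem 1.4]{NYM93}) that the same symbols $E^{(M)}$, formed as ordered products of divided powers of the generators, constitute a basis of $A_{\q}(n)$. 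Because $\Phi_q$ is an algebra map sending generators to generators compatibly with the ordering, it carries the basis element $E^{(M)}\in A_{\q}(n)$ to the basis element $E^{(M)}=(q^{-\frac12})^{d(M)}1_M\in\K_{\q}(n)$; a map taking one basis bijectively onto another is an isomorphism. Thus $\Phi_q$ is bijective and hence an isomorphism of associative algebras.

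The only genuinely delicate point is the compatibility of the two PBW bases under $\Phi_q$, i.e. ensuring that the \emph{ordered} monomial $E^{(M)}$ computed inside $A_{\q}(n)$ maps to the \emph{ordered} monomial computed inside $\K_{\q}(n)$. Since $\Phi_q$ is an algebra homomorphism it automatically respects products and divided powers (the divided-power normalization $[n]^!_{\q}$ is identical in both algebras, by Lemma~\ref{relation-1} on the geometric side), so $\Phi_q(E^{(M)}_{A})=E^{(M)}_{\K}$ provided the products are taken in the same lexicographic order on $\{(i,j)\}$; this is precisely how $E^{(M)}$ was defined in both settings. Hence no reordering relations need to be invoked, and the main obstacle reduces to nothing more than matching conventions. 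The remaining ingredients---associativity, the defining relations, and the identity $E^{(M)}=(q^{-\frac12})^{d(M)}1_M$---are all supplied by the preceding propositions and lemmas, so the argument is essentially an assembly of Propositions~\ref{associative}, \ref{defining} and~\ref{basis} together with \cite[Theorem 1.4]{NYM93}.
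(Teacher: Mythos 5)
Your proposal is correct and follows exactly the paper's own route: the paper proves Theorem~\ref{Phi} by combining Propositions~\ref{associative}, \ref{defining} and \ref{basis} with the fact from \cite[Theorem 1.4]{NYM93} that the $E^{(M)}$ form a basis of $A_{\q}(n)$, which is precisely your assembly of the universal property of the presentation plus the matching of the two PBW-type bases. Your write-up merely makes explicit the bijectivity argument that the paper leaves implicit.
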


\subsection{Algebra homomorphism $\tilde \Delta$}
Given any triple $L, M, N\in \Theta_d$, we set 
\[
c^L_{M, N} =\# \{ \tilde F\in \F | (V, \tilde F)\in O_M, (\tilde F, F)\in O_N\},
\]
where $(V, F)$ is a fixed element in $O_L$. It is clear that $c^L_{M, N}$ is independent of the choice of the pair $(V, F)$.
We set 
\[
a_L = \# \mrm{Stab}_{\GL(d)} (V, F),
\] 
the stabilizer of $(V, F)$ in $\GL(d)$,  where $(V, F)$ is a fixed element in $O_L$.
We define a linear map 
\begin{equation}
\label{D}
\tilde \Delta_d : \K_{\q, d}(n) \to \K_{\q, d}(n)\otimes \K_{\q, d} (n),
\end{equation}
by 
\[
\tilde \Delta_d ( 1_L ) = \sum_{M, N\in \Theta_d} (q^{-\frac{1}{2}} )^{\frac{3}{2}d^2} 
\frac{a_M a_N}{a_L} c^L_{M, N} 1_M\otimes 1_N, \quad \forall L\in \Theta_d.
\]

By summing up all the linear maps $\Delta_d$, we have a linear map 
\begin{align}
\label{Delta}
\tilde \Delta: \K_{\q}(n)\to \K_{\q}(n) \otimes \K_{\q}(n).
\end{align}

\begin{prop}
\label{Delta-1}
$\tilde \Delta $ is an algebra homomorphism with respect to $``\circ"$.
\end{prop}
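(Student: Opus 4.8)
The plan is to prove the identity $\tilde\Delta(x\circ y)=\tilde\Delta(x)\circ\tilde\Delta(y)$ for $x=1_{M''}$ and $y=1_{M'}$ with $M'\in\Theta_{d'}$ and $M''\in\Theta_{d''}$; by bilinearity this suffices, and I write $d=d'+d''$. On the left I would first expand $1_{M''}\circ 1_{M'}=\sum_{L\in\Theta_d}\gamma^L_{M'',M'}\,1_L$, where, by the proof of Proposition~\ref{associative}, the structure constant $\gamma^L_{M'',M'}$ equals $(q^{-\frac{1}{2}})^{f_1-f_2}$ times the number of subspaces $E\subset D$ with $\dim E=d''$ such that, for a fixed $(V,F)\in O_L$, one has $(V,F)\cap E\in O_{M''}$ and $(V,F)\cap\frac{D}{E}\in O_{M'}$. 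Applying $\tilde\Delta_d$ and extracting the coefficient of $1_P\otimes 1_Q$ (with $P,Q\in\Theta_d$) turns the left-hand side into a single normalized count of quadruples $(V,F;E;\tilde F)$ in which $E$ performs the splitting into the $M''$-part and the $M'$-part, while the middle flag $\tilde F$ realizes $(V,\tilde F)\in O_P$ and $(\tilde F,F)\in O_Q$.

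For the right-hand side I would expand $\tilde\Delta(1_{M''})$ and $\tilde\Delta(1_{M'})$, multiply the two tensor factors using $\circ$, and again read off the coefficient of $1_P\otimes 1_Q$. After further expanding each inner product $1_{P''}\circ 1_{P'}$ and $1_{Q''}\circ 1_{Q'}$ into the basis, this coefficient becomes a normalized count of quadruples $(V,F;E;\tilde F)$ in which one first chooses the middle flag and only then a subspace $E$ compatible with it, so that $\tilde F\cap E$ records the intermediate flag of the $M''$-factor and $\tilde F\ \mathrm{mod}\ E$ that of the $M'$-factor. Here the constraints $\co(P'')=\ro(Q'')$, $\co(P')=\ro(Q')$ and the requirement that $P''$ and $P'$ combine to give $P$ (resp. $Q''$ and $Q'$ give $Q$) are exactly the indexing conditions on these configurations.

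The geometric heart of the argument is then a bijection between the two families of configurations. Both count the same data $(V,F;E;\tilde F)$, but organized in opposite orders: \emph{split by $E$, then insert $\tilde F$} versus \emph{insert $\tilde F$, then split by $E$}. The essential content is the linear-algebra fact, entirely parallel to Lemma~\ref{technical}, that a middle flag $\tilde F$ in $D$ compatible with $E\subset D$ is equivalent to the pair $(\tilde F\cap E,\ \tilde F\ \mathrm{mod}\ E)$ together with gluing data forming a vector space whose dimension I would compute explicitly. I would present this as a Fubini-type reindexing of one sum over a common configuration space, in the same spirit as the triple-flag count used in the proof of Proposition~\ref{associative}; the vanishing of non-compatible terms is automatic once the types are forced to match.

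The final and most delicate step is the bookkeeping of normalizations. The orbit--stabilizer factors $a_P a_Q/a_L$ on the left and $a_{P''}a_{Q''}/a_{M''}$, $a_{P'}a_{Q'}/a_{M'}$ together with the factors $a_P,a_Q$ produced by the two inner products on the right must combine into one factor; simultaneously the twists $(q^{-\frac{1}{2}})^{\frac{3}{2}d^2}$, $(q^{-\frac{1}{2}})^{\frac{3}{2}(d')^2}$, $(q^{-\frac{1}{2}})^{\frac{3}{2}(d'')^2}$, the shift $f_1-f_2$, and the shifts of the two inner products must reconcile with the dimension of the gluing space from the previous paragraph. I expect this exponent and stabilizer matching, rather than the existence of the bijection, to be the main obstacle. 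The stabilizer bookkeeping is most transparent in groupoid-cardinality terms, since each orbit $O_M$ is weighted by $\#\GL(d)/a_M$; from this point of view $\tilde\Delta$ is, up to the uniform twist and a global normalization, the orbit-average of the pull--push $q_!\,p^*$ along the maps $(V,\tilde F,F)\mapsto(V,F)$ and $(V,\tilde F,F)\mapsto((V,\tilde F),(\tilde F,F))$, so the homomorphism property may alternatively be phrased as base change for this correspondence against the multiplication correspondence, with the $\GL$-averaging accounting for the appearance of the $a_\bullet$.
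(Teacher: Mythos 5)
Your route is a direct coefficient-matching verification, which is not how the paper proceeds: the paper first converts the product $\circ$ into the rigidified structure constants $h^M_{M'',M'}$ via the orbit--stabilizer formula (\ref{mult-h}), and then invokes Proposition~\ref{newalgebrahomo}, whose combinatorial core, the identity (\ref{Green}), it cites from Lusztig and Grojnowski rather than reproving. A self-contained direct proof along your lines is conceivable, but your outline fails at its central claim.

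The gap is the assertion that both sides count the same quadruples $(V,F;E;\tilde F)$, organized as ``split by $E$, then insert $\tilde F$'' versus ``insert $\tilde F$, then split by $E$''. This is accurate only for the left-hand side: writing $g^L_{M'',M'}$ for the paper's subspace-counting constants, the coefficient of $1_P\otimes 1_Q$ in $\tilde\Delta(1_{M''}\circ 1_{M'})$ is a fixed power of $\q$ times $\sum_L\frac{a_Pa_Q}{a_L}\,g^L_{M'',M'}\,c^L_{P,Q}$, and since $\#O_L=\#\GL(d)/a_L$ the weights $1/a_L$ collapse this into $\frac{a_Pa_Q}{\#\GL(d)}$ times the plain count of quadruples $(V,F,E,\tilde F)$ with \emph{no} compatibility imposed between $E$ and $\tilde F$. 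The right-hand side counts something else entirely: expanding $\tilde\Delta(1_{M''})\circ\tilde\Delta(1_{M'})$, each tensor factor carries its own Lusztig diagram, so the configurations consist of a middle flag $\tilde F''$ in the $d''$-dimensional space, a middle flag $\tilde F'$ in the $d'$-dimensional space, a subspace $E_1$ splitting a pair in $O_P$, and a second, independent subspace $E_2$ splitting a pair in $O_Q$, tied together only through the orbit types $P'',Q'',P',Q'$ and not by equality of restricted data; there is no single $E$ ``compatible with $\tilde F$'' anywhere in this expansion. Hence there is no common configuration space to reindex by Fubini, and equality holds only for the weighted counts. The mechanism that makes the weighted counts comparable is exactly what you postpone as bookkeeping: one must trade the orbit-counting constants $g$ for the rigidified constants $h$, in which restricted pairs are required to \emph{equal} fixed representatives, i.e. prove $g^M_{M'',M'}=\frac{\#\mathcal U}{\#R}\,\frac{a_M}{a_{M''}a_{M'}}\,h^M_{M'',M'}$ as the paper does (your ``gluing'' vector space is precisely $\mathcal U$ here). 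Only for the $h$'s can the four factors on the right be chained through consistent choices of representatives into a single configuration count; the two subspaces $E_1,E_2$ then disappear in favor of one fixed decomposition, and the residual $q$-powers and $a_\bullet$-factors cancel. So the stabilizer and exponent matching is not a reconciliation performed after a bijection -- it is what creates the bijection, and it is the content of the paper's (\ref{mult-h}) combined with (\ref{Green}). Your closing groupoid-cardinality remark is the right high-level picture, but to exploit it one must show the relevant square of quotient groupoids is 2-Cartesian, which is the same rigidification statement in different clothing.
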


The proof will be given in Section ~\ref{comparison}.

Note that the shift $(\q)^{\frac{3}{2}d^2}$ in (\ref{D})  is annoying, moreover  we don't have $\tilde \Delta (E_{st} ) =\sum_{k=1}^n E_{ik}\otimes E_{kj}$ for $1\leq i, j \leq n$.  However, we can remedy this defect by modifying the multiplication
``$\circ$'' and $\tilde \Delta$ as follows:
\begin{equation}
\label{circ'}
\begin{split}
1_{M''} \circ' 1_{M'} & = (q^{-\frac{1}{2}})^{-d'd''} 1_{M''}\circ 1_{M'};\\
\Delta' (1_{L}) &= (\q)^{\frac{3}{2}d^2}   \tilde \Delta(1_{L}).
\end{split}
\end{equation}
That is we get rid of the twist $d'd''$ in the multiplication ``$\circ$'' and $\frac{3}{2}d^2$ in the comultiplication ``$\tilde \Delta$''.  If we set
\[
E'_{st} = a_{e_{st}} 1_{e_{st}} =(q-1) 1_{e_{st}}, \quad \forall 1\leq i, j \leq n.
\]
Then the set $\{ E'_{st} | 1\leq i, j\leq n\}$ satisfies the defining relations of $A_{\q}(n)$ and moreover 
$\Delta' (E'_{st}) =\sum_{k=1}^n E'_{ik} \otimes E'_{kj}$, for any $1\leq i, j\leq n$.

Define  an algebra homomorphism $\epsilon: \K_{\q}(n) \to \C$ by $\epsilon (E'_{st} )= \delta_{st}$ for any $1\leq i, j \leq n$. 
By using Proposition ~\ref{Delta-1}, we can strengthen 
 Theorem ~\ref{Phi} as follows. 

\begin{thm}
\label{bialgebra}
The morphism $\Phi'_q$ defined by $\Phi'_q( E_{st}) =E_{st}'$ for any $1\leq i, j\leq n$  is an isomorphism of bialgebras from 
$A_{\q}(n)$ to  the  bialgebra  $(\K_{\q}(n), \circ', \Delta')$ is equipped with the multiplication $\circ'$ and  comultiplication $\Delta'$ in 
$(\ref{circ'})$ and counit $\epsilon$.
\end{thm}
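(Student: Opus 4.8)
The plan is to reduce the statement to the generators $E_{ij}$ of $A_{\q}(n)$ and to invoke the standard principle that a map of bialgebras which is an algebra isomorphism and which intertwines both the comultiplication and the counit on a generating set is automatically an isomorphism of bialgebras. Thus the proof splits into an algebra part and a coalgebra part.

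For the algebra part I would first show that $\Phi'_q$ is a well-defined algebra homomorphism $A_{\q}(n)\to(\K_{\q}(n),\circ')$, for which it suffices to verify that the elements $E'_{ij}=(q-1)1_{e_{ij}}$ satisfy the defining relations of $A_{\q}(n)$ under $\circ'$. Rather than recomputing, I would transfer the relations from Proposition \ref{defining}: on a product of two degree-one classes the rule (\ref{circ'}) reads $1_{e_{ab}}\circ'1_{e_{cd}}=(\q)\,1_{e_{ab}}\circ 1_{e_{cd}}$ (the case $d'=d''=1$), a single overall scalar $\q$. Since every defining relation of $A_{\q}(n)$ is homogeneous and quadratic in the generators, multiplying each quadratic monomial by the common scalar $\q$ leaves the relation intact, and rescaling all generators by the further common constant $(q-1)$ does so as well; hence the relations valid for $1_{e_{ij}}$ under $\circ$ pass to $E'_{ij}$ under $\circ'$. (Note the normalization $(q-1)$ plays no role here; it is needed only for the comultiplication below.) For bijectivity I would rerun the argument of Proposition \ref{basis} inside $(\K_{\q}(n),\circ')$: tracking the twists of $\circ'$ together with the factors $(q-1)$ exactly as in Lemma \ref{relation-1} and Proposition \ref{basis}, one gets $\Phi'_q(E^{(M)})=\lambda_M 1_M$ with $\lambda_M\neq 0$ for every $M\in\Theta_d$. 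As the $E^{(M)}$ form a basis of $A_{\q}(n)$ and the $1_M$ form a basis of $\K_{\q}(n)$, and $\Phi'_q$ preserves the grading by $d$, it is a linear and hence an algebra isomorphism.

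For the coalgebra part I would check the two intertwining identities on the generators. From $\Delta(E_{ij})=\sum_{k=1}^{n}E_{ik}\otimes E_{kj}$ one has $(\Phi'_q\otimes\Phi'_q)(\Delta(E_{ij}))=\sum_{k=1}^{n}E'_{ik}\otimes E'_{kj}$, while $\Delta'(E'_{ij})=\sum_{k=1}^{n}E'_{ik}\otimes E'_{kj}$ is precisely the formula recorded just before the theorem; this is where the normalization $E'_{ij}=a_{e_{ij}}1_{e_{ij}}=(q-1)1_{e_{ij}}$ enters, since a short computation of $c^{e_{ij}}_{M,N}$ and of $a_{e_{kl}}=q-1$ makes the two $(q-1)$'s on the two sides match. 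Likewise $\epsilon(E'_{ij})=\delta_{ij}=\epsilon(E_{ij})$ by the definition of $\epsilon$. Hence $\Delta'\circ\Phi'_q$ and $(\Phi'_q\otimes\Phi'_q)\circ\Delta$, and also $\epsilon\circ\Phi'_q$ and the counit of $A_{\q}(n)$, agree on the generating set $\{E_{ij}\}$.

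The main obstacle is to promote this agreement on generators to an equality of maps, for which both sides must be algebra homomorphisms; concretely I need that $\Delta'$ is an algebra homomorphism for $\circ'$, i.e. that $(\K_{\q}(n),\circ',\Delta',\epsilon)$ is genuinely a bialgebra. I would derive this from Proposition \ref{Delta-1}, which supplies the multiplicativity of $\tilde\Delta$ for $\circ$, by bookkeeping the scalar twists of (\ref{circ'}): the factor $(\q)^{\frac32 d^2}$ relating $\Delta'$ to $\tilde\Delta$ and the factor $(\q)^{d'd''}$ relating $\circ'$ to $\circ$. The delicate point is that against the tensor-product multiplication $\bullet'$ defined factorwise by $\circ'$ these exponents do not cancel: one finds $\Delta'(x\circ'y)=q^{d'd''}\bigl(\Delta'(x)\bullet'\Delta'(y)\bigr)$, so the tensor-product algebra structure on $\K_{\q}(n)\otimes\K_{\q}(n)$ must be the correspondingly twisted one, and verifying that this residual $q^{d'd''}$ is exactly absorbed is the crux of the whole argument (and reflects the cocycle twist emphasized in the introduction). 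Granting that $\Delta'$ and $\epsilon$ are algebra homomorphisms, both $\Delta'\circ\Phi'_q$ and $(\Phi'_q\otimes\Phi'_q)\circ\Delta$ are algebra homomorphisms into $\K_{\q}(n)\otimes\K_{\q}(n)$ agreeing on the generators $E_{ij}$, hence equal, and similarly for the counit; combined with the first step, $\Phi'_q$ is an isomorphism of bialgebras.
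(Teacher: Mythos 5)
Your outline follows the route the paper itself intends (algebra half from Propositions \ref{defining} and \ref{basis} transported through the uniform degree-one twist of (\ref{circ'}); the generator identities $\Delta'(E'_{st})=\sum_k E'_{ik}\otimes E'_{kj}$ and $\epsilon(E'_{st})=\delta_{st}$; then multiplicativity of $\Delta'$ to propagate from generators), and those parts of your write-up are correct. The genuine gap is exactly the step you defer with ``granting that $\Delta'$ and $\epsilon$ are algebra homomorphisms'': that assertion is the entire content of the theorem beyond degree one, and it is never proved. Worse, with the constants of (\ref{D}) and (\ref{circ'}) it cannot be proved, because it is false. Your own (correct) bookkeeping $\Delta'(x\circ' y)=q^{d'd''}\,\Delta'(x)\circ'\Delta'(y)$ is not a technicality that gets ``absorbed'' by re-twisting the product on $\K_{\q}(n)\otimes\K_{\q}(n)$: the statement forces the standard componentwise product, since $A_{\q}(n)\otimes A_{\q}(n)$ carries the componentwise product and $\Phi'_q\otimes\Phi'_q$ preserves the grading, so a bialgebra isomorphism would make $\Delta'$ multiplicative for componentwise $\circ'$ with no twist. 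The residual $q^{d'd''}$ is a genuine obstruction. Concretely, in degree two: $\Phi'_q(E_{11}^2)=E'_{11}\circ' E'_{11}=(q-1)^2(q+1)1_{2e_{11}}$, and since $c^{2e_{11}}_{2e_{11},2e_{11}}=1$ and $a_{2e_{11}}=\#\GL(2)=q(q-1)^2(q+1)$, the coefficient of $1_{2e_{11}}\otimes 1_{2e_{11}}$ in $\Delta'\Phi'_q(E_{11}^2)$ is $q(q-1)^4(q+1)^2$, whereas its coefficient in $(\Phi'_q\otimes\Phi'_q)\Delta(E_{11}^2)$ is $(q-1)^4(q+1)^2$; the two sides differ by exactly $q=q^{d'd''}$. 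So no argument can close the gap as the statement and definitions stand; the paper's own one-line justification (``by Proposition \ref{Delta-1} we can strengthen Theorem \ref{Phi}'') glosses over precisely this mismatch, and your honest bookkeeping has in fact uncovered an inconsistency in the normalizations rather than a step you may grant.

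The repair, which turns your outline into a complete proof, is to match the two twists: remove $3d'd''$ from the multiplication, not $d'd''$, i.e.\ set $1_{M''}\circ'' 1_{M'}=(q^{-\frac{1}{2}})^{-3d'd''}\,1_{M''}\circ 1_{M'}$. Indeed, formula (\ref{mult-h}) then reads
\[
1_{M''}\circ'' 1_{M'}=(q^{-\frac{1}{2}})^{\sum_{i<j}-c_i'c_j''+d_i'd_j''}\sum_{M\in\Theta}\frac{a_M}{a_{M''}a_{M'}}\,h^M_{M'',M'}\,1_M,
\]
which says precisely that the rescaling $\theta:1_M\mapsto a_M 1_M$ is an algebra isomorphism $(\K_{\q}(n),\cdot)\to(\K_{\q}(n),\circ'')$; and comparing (\ref{D}), (\ref{circ'}) with (\ref{newcomult}) gives $\Delta'\circ\theta=(\theta\otimes\theta)\circ\Delta$ on the nose. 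Hence $(\K_{\q}(n),\circ'',\Delta')$ is a bialgebra and $\Phi'_q=\theta\circ\Psi_q$ is an isomorphism of bialgebras by Theorem \ref{FRT-isom}, with $\theta(\Psi_q(E_{st}))=a_{e_{st}}1_{e_{st}}=E'_{st}$ as required; all your degree-one checks (quadratic relations under a uniform scalar, the coproduct and counit formulas on generators) go through verbatim for $\circ''$, since $(q^{-\frac12})^{-3d'd''}$ is still a single common scalar on quadratic monomials. Without this correction, the factor $q^{d'd''}$ you computed is fatal both to your argument and to the theorem as literally stated.
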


\section{Bialgebra ($\K_{\q}(n), \cdot, \Delta$)}

\subsection{Bialgebra ($\K_{\q}(n), \cdot, \Delta$)}
 In this subsection, we present a more natural bialgebra structure on $\K_{\q}(n)$, isomorphic to $A_{\q}(n)$.

For a triple $(M, M'', M')\in \Theta_d\times \Theta_{d''} \times \Theta_{d'}$ 
such that  
\begin{align}
\label{row}
\ro(M) = \ro(M'')+\ro(M') 
\quad \mbox{and} \quad 
\co(M) =\co (M'') + \co (M'),
\end{align}
we fix a flag $V\in \F_{\ro(M)}$, an element  $(V'', F'')  \in O_{M''}$ and $(V', F')\in O_{M'}$ and a decomposition 
$D=D'' \oplus D'$ in corresponding to the identity $d=d''+d'$. Let
\[
h^M_{M'', M'} = \# \{ F\in \F_{\co (M)} | (V, F)\in O_M, (V, F)\cap D'' =(V'', F'') , (V, F)\cap \frac{D}{D''}=(V', F')\}. 
\]
If the condition (\ref{row}) fails, we set $h^M_{M'', M'}=0$.
Then the number $h^M_{M'', M'}$ is independent of the choices of $V$, $(V'', F'')$ and $(V', F')$, due to ~\cite[1.2]{Lu00}. 

We define a linear map 
\begin{equation}
\label{newmult}
\cdot: \K_{\q}(n)\otimes \K_{\q} (n) \to \K_{\q}(n),
\end{equation}
by 
\[
1_{M''} \cdot 1_{M'} =
(q^{  -\frac{1}{2}})^{\sum_{i<j} -c_i'c_j''+d_i'd_j''}  \sum_{M\in \Theta}   h^M_{M'', M'} 1_M, \quad \forall M'', M'\in \Theta,
\]
where we set $\mbf c'' = \ro(M'')$,  $\mbf d''=\co (M'')$, $\mbf c' = \ro(M'')$ and $\mbf d'=\co (M'')$. The linear map ``$\cdot$''  is associative.   Indeed, it is enough to show that 
\begin{equation}
\label{associative-1}
1_{M'''}\cdot  (1_{M''}\cdot 1_{M'})= (1_{M'''} \cdot 1_{M''} ) \cdot 1_{M'} 
\end{equation}
The left hand side is equal to 
\[
(q^{-\frac{1}{2}} )^{ \sum_{i<j} -(c_i'c_j'' + c_i' c_j'''+ c_i'' c_j''') + (d_i'd_j''+d_i'd_j'''+ d_i'' d_j''') } 
\sum_{N\in \Theta} \sum_{ M\in \Theta} h^M_{M'', M'} h^N_{M''', M} 1_N.
\]
The right hand side is equal to 
\[
(q^{-\frac{1}{2}} )^{ \sum_{i<j} -(c_i'c_j'' + c_i' c_j'''+ c_i'' c_j''') + (d_i'd_j''+d_i'd_j'''+ d_i'' d_j''') } 
\sum_{N\in \Theta}\sum_{\tilde M \in \Theta} h^{\tilde M}_{M''', M''} h^N_{\tilde M, M'} 1_N.
\]
To show (\ref{associative-1}), it boils down to show that 
\[
\sum_{ M\in \Theta} h^M_{M'', M'} h^N_{M''', M}=
\sum_{\tilde M \in \Theta} h^{\tilde M}_{M''', M''} h^N_{\tilde M, M'}.
\]
Both sides are equal to the  quantity $h^N_{M''', M'', M'}$ defined in a similar way as $h^M_{M'', M'}$.  Therefore the identity (\ref{associative-1}) follows. From this, we see that the pair $(\K_{\q}(n), \cdot)$ is an associative algebra. 
We set 
\[
E_{st} = 1_{e_{st}}, \quad \forall 1\leq i, j\leq n.
\]

\begin{prop} 
\label{newdefining}
We have 
\begin{align*}
& E_{st}^m= 1_{me_{st}}, && \forall 1\leq i, j \leq n; m\in \mbb N;\\
&E_{ik}\cdot E_{jl} =E_{jl}\cdot E_{ik}, & & \forall i>j, k<l;\\
&E_{ik}\cdot E_{jl}=E_{jl} \cdot E_{ik} + (\q-q^{-\frac{1}{2}}) E_{jk}\cdot E_{il}, &&\forall  i>j, k>l;\\
& E_{ik}\cdot E_{il}=\q  E_{il}\cdot E_{ik}, && \forall k>l;\\
&E_{ik}\cdot E_{jk} =\q E_{jk}\cdot E_{ik}, &&\forall i>j,
\end{align*}
where the multiplication is taken under $\cdot$ in (\ref{newmult}).
\end{prop}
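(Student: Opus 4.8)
The plan is to verify each of the five identities by computing, for every target matrix $M$, the structure constants $h^M_{M'',M'}$ of (\ref{newmult}) together with the twist exponent $\sum_{i<j}(-c_i'c_j''+d_i'd_j'')$, in exactly the rank-two configurations already isolated in the proofs of Lemmas \ref{relation-a}--\ref{relation-d}. Since each $E_{st}=1_{e_{st}}$ is supported at a single entry, the only pairs $(\mbf c,\mbf d)$ that occur are $(e_i+e_j,e_k+e_l)$, $(2e_i,e_k+e_l)$ and $(e_i+e_j,2e_k)$, and the admissible $M$ are the same one or two orbits as before. I would first dispose of $E_{st}^m=1_{me_{st}}$ by induction on $m$: here every factor has row and column vectors concentrated at the single indices $i$ and $j$, so no ordered pair $a<b$ contributes and the twist is trivial; as $\F_{me_i}\times\F_{me_j}$ is a single point, the only surviving constant is $h^{me_{st}}_{e_{st},(m-1)e_{st}}=1$, which gives the claim.

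For the four remaining relations I would, for each ordering of $1_{M''}\cdot 1_{M'}$, fix a decomposition $D=D''\oplus D'$ of the plane $D$ into lines and a flag $V$ of type $\ro(M)$ compatible with $(V,F)\cap D''\in O_{M''}$, and then count the flags $F$ of type $\co(M)$ realizing $(V,F)\cap D''=(V'',F'')$ and $(V,F)\cap\frac{D}{D''}=(V',F')$; by ~\cite[1.2]{Lu00} the resulting number is independent of these choices. In each case this reduces to a count of lines in $D$ subject to one incidence condition relative to $V$ and $D''$, so that $h^M_{M'',M'}\in\{0,1,q-1,q\}$, the target orbit being read off, in the two-orbit cases, from whether the moving line coincides with the fixed line $V_j$. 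Recording these data, together with the twist $\sum_{i<j}(-c_i'c_j''+d_i'd_j'')$, into the analogues of the three tables of Lemmas \ref{relation-a}--\ref{relation-d} makes each identity transparent.

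The feature that distinguishes this from the computation for ``$\circ$'' is that $h^M_{M'',M'}$ counts flags $F$ against a \emph{fixed} complement $D''$, rather than subspaces $E$ against a fixed pair $(V,F)$; consequently the admissible $V$ itself depends on $M''$ (concretely, on whether the line $V_j$ equals $D''$ or not), and the counts carry genuine factors of $q$. I expect the real content to lie in the relation $E_{ik}\cdot E_{jl}=E_{jl}\cdot E_{ik}+(\q-q^{-\frac{1}{2}})E_{jk}\cdot E_{il}$ for $i>j$, $k>l$, where both orbits $e_{jl}+e_{ik}$ and $e_{jk}+e_{il}$ occur and the identity closes only because the three twists $1,1,q^{-\frac{1}{2}}$ and the three pairs of constants $(1,q-1)$, $(1,0)$, $(0,q)$ conspire: the normalized value of $E_{jk}\cdot E_{il}$ at $e_{jk}+e_{il}$ is $\q$, and $(\q-q^{-\frac{1}{2}})\cdot\q=q-1$ matches exactly the $e_{jk}+e_{il}$-coefficient of $E_{ik}\cdot E_{jl}$. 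Once this balance is checked, the two $\q$-commutation relations follow in the same way, the asymmetry being produced by the twist alone when $\co(M)=2e_k$ forces $F$ and both constants equal $1$, and jointly by the twist and a count $q$ versus $1$ when $\co(M)=e_k+e_l$.
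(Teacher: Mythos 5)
Your proposal is correct and takes essentially the same approach as the paper: the paper's proof consists precisely of the rank-two tables of twists and structure constants $h^M_{M'',M'}$ that you describe (with exactly the entries you predict, e.g. twists $1,1,q^{-\frac{1}{2}}$ and value pairs $(1,q-1)$, $(1,0)$, $(0,q)$ in the case $i>j$, $k>l$), together with the fact $h^{(m''+m')e_{st}}_{m''e_{st},m'e_{st}}=1$ for the first identity, of which your induction on $m$ is a trivial variant. Your explicit line counts and the identified balance $(\q-q^{-\frac{1}{2}})\cdot\q=q-1$ merely supply details that the paper leaves implicit.
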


\begin{proof}
The proof is very much similar to that of Lemmas \ref{relation-1}-\ref{relation-d}.  The first identity is due to the fact that 
$h^{(m''+m')e_{st}}_{m''e_{st}, m' e_{st}} =1$. For any $i>j$, $k<l$, we have 
\begin{center}
\begin{tabular}{|l |l|c|c|c|} \hline

 {\bf }  & {\bf $E_{ik} E_{jl} $ }   & {\bf $E_{jl}E_{ik}$ }    \\ \hline \hline

The shift  & -1 &1 \\ \hline\hline
$(V, F)\in O_{e_{jk}+e_{il}}$ &0 & 0\\ \hline
$(V, F) \in O_{e_{jl}+ e_{ik}}$ & 1 & $q$ \\ \hline 
\end{tabular}
\end{center}

\noindent
For any $i>j, k>l$, we have 

\begin{center}
\begin{tabular}{|l |l|c|c|c|c|} \hline
 {\bf }    & {\bf $E_{jl} E_{ik} $ }  & {\bf $E_{ik}E_{jl}$ } & $E_{jk} E_{il}$   \\ \hline \hline

The shift  & 0 &0  &1\\ \hline\hline
$(V, F)\in O_{e_{jl}+ e_{ik}}$ &1 &$1$ &0\\ \hline\hline

$(V, F)\in O_{e_{jk}+e_{il}}$ &0  & $q-1$ &$q$\\ \hline
                              
\end{tabular}
\end{center}

\noindent
For any $k>l$, we have 
\begin{center}
\begin{tabular}{|l |l|c|c|c|} \hline

 {\bf }  & {\bf $E_{il} E_{ik} $ } 
 & {\bf $E_{ik}E_{il}$ }     \\ \hline \hline

The shift  & 0 &1 \\ \hline\hline
$(V, F)\in O_{e_{ik}+e_{il}}$ &1 &$q$\\ \hline

\end{tabular}
\end{center}

\noindent
For any $i>j$, we have 
\begin{center}
\begin{tabular}{|l |l|c|c|c|} \hline

 {\bf }  & {\bf $E_{jk} E_{ik} $ }   & {\bf $E_{ik}E_{jk}$ }    \\ \hline \hline

The shift  & 0 &-1 \\ \hline\hline
$(V, F)\in O_{e_{ik}+e_{jk}}$ &1 &$1$\\ \hline

\end{tabular}
\end{center}
The rest of the identities follow from the above  computations.
\end{proof}

Similar to the element $E^{(M)}$, we set
\[
E^M =\prod_{1\leq i, j\leq n} E_{st}^{m_{st}},
\]
where the product is taken in the lexicographic order and under the multiplication ``$\cdot$''. 

\begin{prop}
\label{newbasis}
Under the multiplication ``$\cdot$'', we have 
\[
E^M = q^{\frac{1}{2}\sum_{i>k, j<l} m_{st} m_{kl}} 1_M. 
\]
As a consequence, the set $\{ E_{st}| 1\leq i, j\leq n\}$ generates the algebra $(\K_{\q}(n),\cdot )$.
\end{prop}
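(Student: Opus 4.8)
The plan is to mimic the proof of Proposition~\ref{basis}, splitting the computation of $E^M$ into an accumulated scalar twist and an ``untwisted'' iterated structure constant, and then to identify the latter with the geometry already recorded in Lemma~\ref{technical}. By the first identity in Proposition~\ref{newdefining} we have $E_{ij}^{m_{ij}} = 1_{m_{ij}e_{ij}}$, so
\[
E^M = 1_{m_{11}e_{11}} \cdot 1_{m_{12}e_{12}} \cdots 1_{m_{nn}e_{nn}},
\]
a product of $n^2$ factors taken in the lexicographic order under $\cdot$. Writing this out via $(\ref{newmult})$, the coefficient of $1_M$ is a product of two contributions: the scalars $(q^{-\frac{1}{2}})^{\sum_{i<j}(-c_i'c_j''+d_i'd_j'')}$ collected over the $n^2-1$ multiplications, and the iterated counting number obtained by composing the $h^{\bullet}_{\bullet,\bullet}$. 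I will treat these two pieces in turn.

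For the twist, I left-associate the product so that at each step the right-hand factor $M'$ is the next generator $1_{m_{ij}e_{ij}}$, for which $\mbf c' = m_{ij}e_i$ and $\mbf d' = m_{ij}e_j$, while the left-hand factor $M''$ is the partial product, whose row and column types are the lexicographically earlier partial sums. Exactly as in the proof of Proposition~\ref{basis}, the term $\sum_{i<j} c_i'c_j''$ vanishes at every step, since the relevant column sums $c_b''$ of the partial product vanish once $b$ exceeds the row index of the new generator. The term $\sum_{i<j} d_i'd_j''$ at the step attaching $1_{m_{ij}e_{ij}}$ equals $m_{ij}\sum_{k<i,\, l>j} m_{kl}$, and summing over all steps gives $\sum_{i>k,\, j<l} m_{ij}m_{kl}$. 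Since the multiplication $\cdot$ carries no global $d'd''$ correction, in contrast to $\circ$, the total twist is $(q^{-\frac{1}{2}})^{\sum_{i>k,\, j<l} m_{ij}m_{kl}}$.

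For the untwisted factor, I use the iterated structure constant $h^N_{m_{11}e_{11},\, \ldots,\, m_{nn}e_{nn}}$ introduced in the associativity argument preceding $(\ref{newmult})$: after fixing $V\in\F_{\ro(M)}$ and a decomposition $D = \bigoplus_{i,j} D_{ij}$ with $\dim D_{ij}=m_{ij}$, it counts the flags $F$ with $(V,F)\in O_N$ whose restriction to each graded piece $D_{ij}/D_{i,j-1}$ is of type $m_{ij}e_{ij}$, where $D_{ij}=\bigoplus_{(k,l)\le(i,j)} D_{kl}$ and $V_i$ is determined as $D_{in}$. This is precisely the fiber of the projection $p\colon \tF_M \to \F_M$ over the $n^2$-step flag $D_{\bullet}$ determined by the decomposition. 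By Lemma~\ref{technical} any such $(V,F)$ is forced to be of type $M$, so the iterated constant is supported on $N=M$ alone, and its value is the cardinality of that fiber, namely $q^{\sum_{i>k,\, j<l} m_{ij}m_{kl}}$. Multiplying the two contributions yields
\[
E^M = (q^{-\frac{1}{2}})^{\sum_{i>k,\, j<l} m_{ij}m_{kl}}\; q^{\sum_{i>k,\, j<l} m_{ij}m_{kl}}\; 1_M = q^{\frac{1}{2}\sum_{i>k,\, j<l} m_{ij}m_{kl}}\, 1_M,
\]
as claimed.

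Since each $1_M$ is thereby a nonzero scalar multiple of a $\cdot$-product of the generators $E_{ij}$, and the $1_M$ form a basis of $\K_{\q}(n)$, the $E_{ij}$ generate the algebra. The one delicate point is the third step: one must match the ordering of the iterated $h^{\bullet}_{\bullet,\bullet}$ to the sub-quotient filtration of Lemma~\ref{technical} (leftmost factor corresponding to the deepest subspace $D_{11}$, rightmost to the top quotient) and verify that it is the \emph{raw} fiber count of $p$, rather than the $\flat$-normalized one, that enters here. This discrepancy between a normalized and an unnormalized count is exactly what produces the extra factor $q^{\sum_{i>k,\,j<l}m_{ij}m_{kl}}$ that is absent from the $\circ$-picture of Proposition~\ref{basis}, and getting that bookkeeping right is where I expect the main care to be needed.
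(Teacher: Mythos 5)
Your proposal is correct and takes essentially the same route as the paper's own proof: it computes the accumulated twist exactly as in Proposition~\ref{basis} (the $c$-terms vanish step by step, the $d$-terms sum to $\sum_{i>k,\,j<l} m_{ij}m_{kl}$, and there is no $d'd''$ correction for ``$\cdot$''), and then identifies the untwisted iterated structure constant $h^{\tilde M}_{m_{11}e_{11},\ldots,m_{nn}e_{nn}}$ with the fiber count of $p$ from Lemma~\ref{technical}, supported on $\tilde M=M$ with value $q^{\sum_{i>k,\,j<l} m_{ij}m_{kl}}$. The paper's proof is a terser version of precisely this argument, so only cosmetic remarks apply (e.g.\ the entries $c_b''$ are row sums of the partial product, not column sums, and the contrast with Proposition~\ref{basis} comes from counting flags $F$ over a fixed $D_\bullet$ rather than flags $D_\bullet$ over a fixed $(V,F)$, not from the $\flat$-normalization).
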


\begin{proof}
As in the Proof of  Proposition ~\ref{basis}, we see that the shift in taking the products in $E^M$  is 
$(q^{-\frac{1}{2}})^{\sum_{i>k, j<l} m_{st} m_{kl}}$.  Moreover, we have 
\[
E^M = (q^{-\frac{1}{2}})^{\sum_{i>k, j<l} m_{st} m_{kl}} \sum_{\tilde M\in \Theta} 
h^{\tilde M}_{m_{11}e_{11}, \cdots, m_{1n}e_{1n}, m_{12}e_{12},\cdots, m_{nn}e_{nn}} 1_{\tilde M},
\]
where the structure constant $h^{\tilde M}_{m_{11}e_{11}, \cdots, m_{1n}e_{1n}, m_{12}e_{12},\cdots, m_{nn}e_{nn}}$ is defined in a similar manner as $h^M_{M'', M'}$. The argument in the Proof of Lemma ~\ref{technical} shows that 
this structure constant  is equal to $q^{\sum_{i>k, j<l} m_{st} m_{kl}}$ when $\tilde M=M$ and zero otherwise. So we have the identity in the Proposition. We are through.
\end{proof}

We define a bilinear map 
\begin{equation}
\label{newcomult}
\Delta: \K_{\q}(n) \to \K_{\q}(n) \otimes \K_{\q}(n)
\end{equation}
by
\[
\Delta (1_L) = \sum_{M, N\in\Theta} c^{L}_{M, N} 1_M  \otimes 1_N, \quad \forall  L\in \Theta
\]
where 
\[
c^L_{M, N} =\# \{ \tilde F\in \F | (V, \tilde F) \in O_M, (\tilde F, F)\in O_N\},
\]
for a fixed element $(V, F) \in O_L$. It is clear that $c^L_{M, N}$ is independent of the choices of the pair $(V, F)$. 
Moreover, $c^L_{M, N}$ is the structure constants for the $q$-Schur algebra defined in ~\cite{BLM90}.

\begin{prop}
\label{newalgebrahomo}
We have 
\[
\Delta (E_{st}) =\sum_{k=1}^n E_{ik} \otimes E_{kj} , \quad \forall 1\leq i, j \leq n. 
\]
Moreover, $\Delta$ is an algebra homomorphism with respect to the map $``\cdot "$ in $(\ref{newmult})$.
\end{prop}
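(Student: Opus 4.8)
The plan is to prove the two assertions separately: first the explicit coproduct on the generators $E_{ij}=1_{e_{ij}}$, and then the homomorphism property, which I would reduce to a single identity among the structure constants $h^{\bullet}_{\bullet,\bullet}$ and $c^{\bullet}_{\bullet,\bullet}$ and settle by a double count.

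For the first assertion I would compute $\Delta(1_{e_{ij}})$ directly, exploiting that everything takes place in dimension one: since $\ro(e_{ij})=e_i$ and $\co(e_{ij})=e_j$ we have $d=1$, so $O_{e_{ij}}=\F_{e_i}\times\F_{e_j}$ is a single point $(V,F)$ in $D=\mbb F_q$. The admissible intermediate flags $\tilde F\in\F$ are exactly the flags of type $e_k$ in $D$, one for each $k=1,\dots,n$; for the flag of type $e_k$ one reads off $(V,\tilde F)\in O_{e_{ik}}$ and $(\tilde F,F)\in O_{e_{kj}}$. Hence $c^{e_{ij}}_{M,N}=1$ precisely when $(M,N)=(e_{ik},e_{kj})$ for some $k$ and vanishes otherwise, which yields $\Delta(E_{ij})=\sum_{k=1}^n E_{ik}\otimes E_{kj}$.

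For the homomorphism property I would give $\K_{\q}(n)\otimes\K_{\q}(n)$ the tensor-product algebra structure of ``$\cdot$'' and verify $\Delta(1_{M''}\cdot 1_{M'})=\Delta(1_{M''})\cdot\Delta(1_{M'})$ by comparing, for each fixed $(A,B)$, the coefficient of $1_A\otimes 1_B$. Expanding both sides, the left coefficient is, up to a power of $\q$, equal to $\sum_{M}h^M_{M'',M'}\,c^M_{A,B}$, while the right coefficient is $\sum c^{M''}_{A'',B''}\,c^{M'}_{A',B'}\,h^A_{A'',A'}\,h^B_{B'',B'}$, the inner sum running over $A'',A',B'',B'$. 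A preliminary step is to check that the $\q$-twists match. Writing $\gamma''=\co(A'')=\ro(B'')$ and $\gamma'=\co(A')=\ro(B')$ for the type of the inserted middle flags, the right-hand twist is $\sum_{i<j}(-c_i'c_j''+\gamma_i'\gamma_j'')+\sum_{i<j}(-\gamma_i'\gamma_j''+d_i'd_j'')$, whose $\gamma$-contributions cancel to leave exactly $\sum_{i<j}(-c_i'c_j''+d_i'd_j'')$, the twist on the left; notably the cancellation does not depend on the intermediate type.

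It then remains to prove the structure-constant identity
\[
\sum_{M}h^M_{M'',M'}\,c^M_{A,B}=\sum_{A'',A',B'',B'}c^{M''}_{A'',B''}\,c^{M'}_{A',B'}\,h^A_{A'',A'}\,h^B_{B'',B'}.
\]
I would establish this by counting a single set in two ways. Fix a flag $V$, a decomposition $D=D''\oplus D'$, and reference pairs $(V'',F'')\in O_{M''}$ in $D''$ and $(V',F')\in O_{M'}$ in $D'$ compatible with $V$. Both sides count the configurations $(\tilde F,F)$ of a middle flag $\tilde F$ and a final flag $F$ in $D$ subject to $(V,\tilde F)\in O_A$, $(\tilde F,F)\in O_B$, and $(V,F)\cap D''=(V'',F'')$, $(V,F)\cap\frac{D}{D''}=(V',F')$: the left-hand side organizes the count by first building $F$ (recording its orbit $O_M$) and then inserting $\tilde F$, while the right-hand side first inserts $\tilde F$ in $D''$ and in $D/D''$ and then builds $F$. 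The key observation is that any such $\tilde F$ automatically restricts to $\tilde F\cap D''$ and projects to its image in $D/D''$, so its splitting along the decomposition is forced and the assignment $(\tilde F,F)\mapsto(\tilde F\cap D'',\,\tilde F\bmod D'',\,\tilde F,\,F)$ is a bstection onto the quadruples counted on the right. The main obstacle is exactly this bookkeeping: one must check that restricting to $D''$ and projecting to $D/D''$ sets up the bstection between the glued configurations and the pairs of configurations in $D''$ and $D/D''$ without losing or repeating any, and that the right-hand product genuinely enumerates each configuration once. This is where the independence of the numbers $h^{\bullet}_{\bullet,\bullet}$ from the chosen reference flags, established in \cite[1.2]{Lu00}, is essential, since it is what lets the count factor as a product of the four structure constants.
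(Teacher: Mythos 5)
Your proposal is correct, and its reduction skeleton is the same as the paper's: the generator formula follows from the one-dimensional computation you give (the paper simply declares it clear), and after the twist-matching you verify --- the cancellation of the intermediate-type $\gamma$ contributions is precisely what lets the paper pull the common factor $(q^{-\frac{1}{2}})^{\sum_{i<j}-c_i'c_j''+d_i'd_j''}$ out of both sides --- everything rests on the structure-constant identity (\ref{Green}). The genuine difference is how that identity is handled: the paper disposes of it by citing \cite[Proposition 1.5]{Lu00} and \cite{Grojnowski}, whereas you prove it directly by a double count of the pairs $(\tilde F,F)$ with $(V,\tilde F)\in O_M$, $(\tilde F,F)\in O_N$ and $(V,F)$ restricting along $D=D''\oplus D'$ to the fixed reference pairs in $O_{L''}$ and $O_{L'}$. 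Your count is sound: enumerating $F$ first gives $\sum_L h^L_{L'',L'}\,c^L_{M,N}$; enumerating first the restrictions $(\tilde F\cap D'',\,\tilde F\bmod D'')$, then $\tilde F$, then $F$, gives the right-hand side of (\ref{Green}), each pair being counted exactly once because its restriction data is determined by it; and converting these nested counts into the product of four structure constants is exactly where the independence statement \cite[1.2]{Lu00} enters, as you correctly flag. Your route thus buys a self-contained proof whose only non-elementary input is an independence property the paper already invokes merely to define $h^M_{M'',M'}$, and it makes transparent why, in contrast with Green's formula for Hall algebras of quivers, no power-of-$q$ correction intervenes here: the splitting of the middle flag along the fixed decomposition is forced, so the count factors on the nose. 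What the paper's citation buys is brevity, together with the assurance that the bookkeeping you defer (the compatibility of the reference data inside each of the four constants) has been carried out in print.
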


\begin{proof}
It is clear that the identity in the Proposition holds. We are left to show that $\Delta$ is an algebra homomorphism. This boils down to show that 
\[
\Delta(1_{L''}\cdot 1_{L'}) = \Delta(1_{L''}) \cdot \Delta (1_{L'}), \quad \forall L'', L'\in \Theta.
\]
The left hand side is equal to 
\[
(q^{-\frac{1}{2}})^{\sum_{i<j} -c_i'c_j'' +d_i'd_j''} \sum_{M, N} \sum_{L} h^L_{L'', L'} c^L_{M, N} 1_M\otimes 1_N,
\]
while the right hand side is 
\[
(q^{-\frac{1}{2}})^{\sum_{i<j} -c_i'c_j'' +d_i'd_j''} \sum_{M, N} 
\sum_{M'', N'', M', N'}
h^M_{M'', M'} h^N_{N'', N'} c^{L''}_{M'', N''} c^{L'}_{M', N'} 1_M\otimes 1_N.
\]
Thus, it reduces to show that 
\begin{equation}
\label{Green}
\sum_{L} h^L_{L'', L'} c^L_{M, N}=
\sum_{M'', N'', M', N'}
h^M_{M'', M'} h^N_{N'', N'} c^{L''}_{M'', N''} c^{L'}_{M', N'}.
\end{equation}
This is  proved in ~\cite[Proposition 1.5]{Lu00} and ~\cite{Grojnowski}.
\end{proof}

By Proposition ~\ref{newdefining}, ~\ref{newbasis} and ~\ref{newalgebrahomo}, we have 

\begin{thm}
\label{FRT-isom}
The map defined by sending the generators, $E_{st}$,  in $A_{\q}(n)$ to the elements with the same notation in $\K_{\q}(n)$ defines an isomorphism of bialgebras
\[
\Psi_q : A_{\q}(n) \to (\K_{\q}(n), \cdot, \Delta),
\]
where $(\cdot, \Delta)$ are defined in $(\ref{newmult})$ and $(\ref{newcomult})$.
\end{thm}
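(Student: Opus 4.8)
The plan is to assemble the isomorphism directly from Propositions \ref{newdefining}, \ref{newbasis} and \ref{newalgebrahomo}, treating the algebra structure and the coalgebra structure separately and then checking that $\Psi_q$ intertwines them. First I would establish that $\Psi_q$ is a well-defined algebra homomorphism. By definition (Section \ref{qmatrix}), $A_{\q}(n)$ is the universal unital associative $\C$-algebra on the symbols $E_{st}$ subject to the four families of FRT relations with $v=\q$. Proposition \ref{newdefining} shows that the elements $E_{st}=1_{e_{st}}\in\K_{\q}(n)$ satisfy exactly these relations under the multiplication $\cdot$. By the universal property, the assignment $E_{st}\mapsto E_{st}$ extends uniquely to a unital algebra homomorphism $\Psi_q:A_{\q}(n)\to(\K_{\q}(n),\cdot)$.

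Next I would prove that $\Psi_q$ is bijective. Surjectivity is immediate from Proposition \ref{newbasis}, which shows that the ordered monomials $E^M$, and hence the $E_{st}$, generate $(\K_{\q}(n),\cdot)$. For injectivity I would run a basis count: the ordered monomials $\{E^M \mid M\in\Theta\}$ form a PBW-type basis of $A_{\q}(n)$ (they differ from the divided-power monomials of \cite[Theorem 1.4]{NYM93} by nonzero scalars). Proposition \ref{newbasis} evaluates $\Psi_q(E^M)=E^M=q^{\frac{1}{2}\sum_{i>k, j<l} m_{st} m_{kl}}\,1_M$, a nonzero scalar multiple of $1_M$. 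Since $\{1_M\mid M\in\Theta\}$ is linearly independent in $\K_{\q}(n)$ by construction, the image under $\Psi_q$ of the basis $\{E^M\}$ is linearly independent, whence $\Psi_q$ is injective. Combined with surjectivity, $\Psi_q$ is an algebra isomorphism.

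Then I would verify compatibility with the coalgebra structures. On generators the coproduct \eqref{A-co} of $A_{\q}(n)$ reads $\Delta(E_{st})=\sum_{k=1}^n E_{ik}\otimes E_{kj}$, and Proposition \ref{newalgebrahomo} proves the identical formula for $\Delta$ on $\K_{\q}(n)$ and, crucially, that this $\Delta$ is an algebra homomorphism for $\cdot$. Consequently the two algebra homomorphisms $A_{\q}(n)\to\K_{\q}(n)\otimes\K_{\q}(n)$ given by $\Delta\circ\Psi_q$ and $(\Psi_q\otimes\Psi_q)\circ\Delta$ agree on the generators $E_{st}$; as these generate $A_{\q}(n)$, the two maps coincide, so $\Psi_q$ intertwines the comultiplications. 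The counit is handled in the same way: $\epsilon(E_{st})=\delta_{st}$ on both sides, and two algebra maps to $\C$ agreeing on generators are equal, so $\epsilon\circ\Psi_q=\epsilon$. Since $\Psi_q$ is a bijective algebra homomorphism respecting $\Delta$ and $\epsilon$, its inverse does too, and $\Psi_q$ is an isomorphism of bialgebras.

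The genuinely nontrivial input is already dispatched inside Proposition \ref{newalgebrahomo}, namely the multiplicativity of $\Delta$, which rests on the structure-constant identity \eqref{Green} (the Green/Lusztig formula, \cite[Proposition 1.5]{Lu00} and \cite{Grojnowski}). Given that, the theorem is essentially an assembly of the preceding results; the only point requiring care is making the scalar normalization relating $E^M$ to the PBW basis explicit, so that the injectivity count in the second paragraph is valid.
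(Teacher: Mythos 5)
Your proposal is correct and follows essentially the same route as the paper: the paper's proof of Theorem \ref{FRT-isom} is precisely the assembly of Propositions \ref{newdefining}, \ref{newbasis} and \ref{newalgebrahomo}, with injectivity resting (as in the paper's earlier Theorem \ref{Phi}) on the fact that the ordered monomials form a basis of $A_{\q}(n)$ by \cite[Theorem 1.4]{NYM93}. You have merely made explicit the steps the paper leaves implicit --- the universal-property argument, the surjectivity/injectivity count via $\Psi_q(E^M)=q^{\frac{1}{2}\sum_{i>k,\,j<l} m_{st} m_{kl}}\,1_M$, and the generators-only check for compatibility of the coproducts and counits --- which is exactly how the paper's one-line proof is meant to be read.
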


\subsection{Automorphisms}

The bstection $\F_{\mbf c}\times \F_{\mbf d}\to \F_{\mbf d} \times \F_{\mbf c} $ defined by $(V, F) \mapsto (F, V)$ induces (say by the pullback function $f^*$ in Section ~\ref{operations}) an automorphism 
\[
\tau_1: A_{\q}(n) \to A_{\q}(n), \quad  E_{st}\mapsto E_{ji} \quad \forall 1\leq i, j\leq n
\] 
as an algebra, and an anti-automorphism of $A_{\q}(n)$ as a coalgebra. 

Given $\mbf c =(c_1, \cdots, c_n)$, we set $\mbf c^t= (c_n, \cdots, c_1)$.  Let $J$ be the matrix 
\[
J=
\begin{pmatrix}
0 & 0 & \cdots 0 & 1 \\
0 & 0 & \cdots 1 & 0 \\
\cdot & \cdot & \cdots  &\cdot\\
1 & 0 & \cdots 0 & 0.
\end{pmatrix}.
\]
Let $P_{\mbf c}$ be a parabolic subgroup of $\GL(d)$ of type $\mbf c$. For example, $P_{\mbf c}$ can be taken as the set of all invertible block matrices that are upper triangular and the block size on the diagonal is $c_1$, $c_2$, $\cdots $, $c_n$. Then $P_{\mbf c^t}= JP_{\mbf c} J$ is a parabolic subgroup of type $\mbf c^t$ consisting of all lower triangular matrices of block size on the diagonal equal to $c_n$, $c_{n-1}$, $\cdots$, $c_1$. If we fix a flag $F_{\mbf c}$ (resp. $F_{\mbf c^t}$) in $\F_{\mbf c}$ (resp. $\F_{\mbf c^t}$), such that the stabilizer of $F_{\mbf c}$ (resp. $F_{\mbf c^t}$) is $P_{\mbf c}$ (resp. $P_{\mbf c^t}$),  then the assignment $gF_{\mbf c} \mapsto gJF_{\mbf c^t}$ defines a bstection $\F_{\mbf c} \to \F_{\mbf c^t}$. 
By a similar manner, we can define a bstection $\F_{\mbf c}\times \F_{\mbf d} \to \F_{\mbf c^t} \times \F_{\mbf d^t}$ such that it induces a bstection on the $\GL(d)$ orbits  given by
\[
\Theta_d(\mbf{c, d}) \to \Theta_d(\mbf c^t, \mbf d^t), \quad M\mapsto JMJ. 
\]
This bstection defines an anti-automorphism of algebras and automorphism of coalgebras
\[
\tau_2: A_{\q}(n) \to A_{\q}(n), \quad E_{st} \mapsto E_{n+1-i, n+1-j}, \quad \forall 1\leq i, j\leq n.
\]

The composition $\tau_3=\tau_1\tau_2$ is then an anti-automorphism of $A_{\q}(n)$ as algebras and as coalgebras. 

Note that $\tau_i$ for $i=1, 2, 3$ are the (anti)-automorphisms defined in ~\cite{PW91}.

\subsection{Proof of Proposition ~\ref{Delta-1}} 
\label{comparison} 
We shall compare the two algebras $(\K_{\q}(n), \circ, \tilde \Delta)$ and $(\K_{\q}(n), \cdot, \Delta)$.
For a triple $(M, M'', M' )\in \Theta_d\times \Theta_{d''}\times \Theta_{d'}$, we fix a pair $(V, F)\in O_M$ and set
\[
g^M_{M'', M'} = \# \{ E\subset \mbb F_q^d | (V, F) \cap E\in O_{M''} ,  (V, F) \cap \frac{\mbb F_q^d}{E} \in O_{M'}\}. 
\]
The definition of $g^M_{M'', M'}$ is independent of the choice of $(V, F)$.  The multiplication ``$\circ$''  in (\ref{twist}) can be rewritten as 
\[
1_{M''}\circ 1_{M'}  = (q^{-\frac{1}{2}})^{ f_1-f_2} \sum_{M\in \Theta} g^M_{M'', M'} 1_M.
\]
We fix element $(V'', F'') \in O_{M''}$ and $(V', F') \in O_{M'}$ and a decomposition $\mbb F_q^{d'' } \oplus \mbb F_q^{d'} = \mbb F_q^d$. 
Let $\mathcal U$ be the set of all flags $\tilde V$ such that $\tilde V\cap \mbb F_q^{d''} = V''$ and $\tilde V\cap \frac{\mbb F_q^d}{\mbb F_q^{d''}} = V'$. 
 Then by an analysis of the diagram (\ref{induction}), we have 
 \[
 g^M_{M'', M'} = \frac{\# \mathcal U}{\#R} \frac{a_M}{a_{M''} a_{M'}  } h^M_{M'', M'},
 \]
 where  $R$ is the unipotent  subgroup in $\GL(d)$ consisting of all elements $g$ 
 such that $g(x) =x $ for $x\in  \mbb F_q^{d''}$ and $g(y) = y $ modulo $\mbb F_q^{d''}$ for any $y\in  \mbb F_q^{d'}$. 
 We have 
 \[
  \frac{\# \mathcal U}{\#R} = q^{\sum_{i<j} c_i'c_j'' -d'd'' } = (q^{-\frac{1}{2}})^{-2\sum_{i<j} c'_ic''_j +2 d'd''},
 \]
 where the notations are in consistent with that in Section ~\ref{mult}.
Thus the multiplication ``$\circ$'' in (\ref{twist}) can be rewritten as 
\begin{equation}
\label{mult-h}
1_{M''}\circ 1_{M'}  = (q^{-\frac{1}{2}})^{ \sum_{i<j} -c_i'c_j'' + d_i'd_j'' + 3d'd''} \sum_{M\in \Theta}   \frac{a_M}{a_{M''} a_{M'}  } h^M_{M'', M'} 1_M.
\end{equation}

By (\ref{mult-h}) and Proposition ~\ref{newalgebrahomo},  we have  proved Proposition ~\ref{Delta-1}.

\subsection{Comparison of $\GL_{\q}(n)$ and $\GL^{DD}_q(n)$}
We define a linear map 
\begin{equation}
\label{DD-mult}
\bullet: \K_{\q}(n)\otimes \K_{\q} (n) \to \K_{\q}(n),
\end{equation}
by 
\[
1_{M''} \bullet 1_{M'} =  \sum_{M\in \Theta}   h^M_{M'', M'} 1_M, \quad \forall M'', M'\in \Theta,
\]
From the argument of the fact that $(\K_{\q}(n), \cdot)$ is an associative algebra, we see that the pair $(\K_{\q}(n),\bullet)$ is an associative algebra. Moreover, 

\begin{prop}
\label{DD-isom}
The assignment of sending    $c_{st}$  to
$E_{ji}$, for any  $1\leq i, j\leq n$, defines an isomorphism of bialgebras
\[
\Psi_q': B_q(n) \to (\K_{\q}(n), \bullet, \Delta),
\]
where $\Delta $ is defined in (\ref{newcomult}).
\end{prop}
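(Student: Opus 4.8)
The plan is to follow the template already used for the FRT algebra in Theorem~\ref{FRT-isom}, adapted to the untwisted product $\bullet$ and to the transposed assignment $c_{ij}\mapsto E_{ji}$. The key structural observation is that $\bullet$ is built from the very same constants $h^M_{M'',M'}$ as $``\cdot"$, only without the cocycle $(q^{-\frac12})^{\sum_{i<j}(-c_i'c_j''+d_i'd_j'')}$; this is the geometric incarnation of the statement that the two quantizations differ by a twist of a cocycle in the multiplication. Thus all the orbit geometry needed is already in place, and the three things to verify are that $\Psi_q'$ respects the algebra structures, that it is bijective, and that it respects the coalgebra structures.

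First I would check that the images $E_{ji}$ satisfy the defining relations of $B_q(n)$ under $\bullet$. Exactly as in Proposition~\ref{newdefining}, this reduces to a handful of rank-one and rank-two orbit computations: for each relation one fixes the pair $(\mbf{c},\mbf{d})$, lists the one or two $\GL$-orbits in $\F_{\mbf{c}}\times\F_{\mbf{d}}$, and reads off the constants $h^M_{M'',M'}$ from the geometry, now \emph{without} applying any twist. Two features then conspire to produce the Dipper--Donkin relations rather than the FRT ones: (i) the absence of the twist makes products sharing a second index commute (both structure constants equal $1$) while products sharing a first index $q$-commute with factor $q$; and (ii) the transpose $c_{ij}\mapsto E_{ji}$ interchanges the two indices, so that the DD pattern ``equal first index $\Rightarrow$ commute, equal second index $\Rightarrow$ $q$-commute'' is matched to the geometric pattern just described. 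Carrying this out for all four families with $v=q$ shows that $\Psi_q'$ is an algebra homomorphism.

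Next I would establish bijectivity. Reading the argument of Proposition~\ref{newbasis} and Lemma~\ref{technical} with the untwisted product, the ordered monomial $\prod_{1\le i,j\le n}E_{ij}^{m_{ij}}$ (product taken under $\bullet$) equals $q^{\sum_{i>k,j<l}m_{ij}m_{kl}}\,1_M$, a nonzero multiple of $1_M$; hence these monomials form a basis of $(\K_{\q}(n),\bullet)$. Since the analogous ordered monomials in the $c_{ij}$ form a basis of $B_q(n)$ (the standard PBW property of the Dipper--Donkin algebra), and $\Psi_q'$ carries one basis onto a rescaling of the other, it is a linear isomorphism, and therefore an isomorphism of algebras.

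Finally I would treat the coproduct. The comultiplication is literally the map of Proposition~\ref{newalgebrahomo}, and the compatibility $\Delta(1_{L''}\bullet 1_{L'})=\Delta(1_{L''})\bullet\Delta(1_{L'})$ follows from the same Green-type identity~(\ref{Green}); indeed the untwisted product makes this step cleaner than the $``\cdot"$ case, since no cocycle must be tracked through the tensor product, so $(\K_{\q}(n),\bullet,\Delta)$ is a bialgebra. It then remains to compare $\Delta$ with the DD comultiplication on generators, where $\Delta(E_{ji})=\sum_k E_{jk}\otimes E_{ki}$ is to be matched against the image $\sum_k E_{ki}\otimes E_{jk}$ of $\Delta(c_{ij})=\sum_k c_{ik}\otimes c_{kj}$. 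This comparison is the point I expect to require the most care: the transpose that the multiplication forces upon us acts on the coproduct exactly as the involution $\tau_1$ of the Automorphisms subsection, interchanging the two tensor factors, so one must track this transpose consistently through $\Delta$ and reconcile it with the Dipper--Donkin coproduct. I would concentrate the argument here; the remaining verifications are the routine bookkeeping already carried out in the preceding propositions.
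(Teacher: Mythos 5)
Your first three paragraphs are, in content and in order, exactly the paper's proof: the paper verifies the Dipper--Donkin relations for the $E_{ji}$ by re-using the untwisted structure constants tabulated in the proof of Proposition~\ref{newdefining}; it deduces generation (and, granting the monomial basis of $B_q(n)$, bijectivity) from Proposition~\ref{newbasis}, recording precisely your identity $E^{M}=q^{\sum_{i>k,\,j<l}m_{ij}m_{kl}}\,1_{M}$ for the product taken under $\bullet$; and it obtains the fact that $\Delta$ is an algebra homomorphism for $\bullet$ from the proof of Proposition~\ref{newalgebrahomo} with the cocycle forgotten. Up to that point your proposal and the paper coincide.

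The gap is in your final paragraph, and it is worth being precise about why it cannot be closed in the form you envision. With $\Psi_q'(c_{ij})=E_{ji}$ one has $(\Psi_q'\otimes\Psi_q')\Delta(c_{ij})=\sum_{k}E_{ki}\otimes E_{jk}$, while $\Delta(\Psi_q'(c_{ij}))=\sum_{k}E_{jk}\otimes E_{ki}$. Since the $1_M$ are linearly independent, these two elements of $\K_{\q}(n)\otimes\K_{\q}(n)$ are never equal for $n\geq 2$; for $n=2$, $i=1$, $j=2$ they read $E_{11}\otimes E_{21}+E_{21}\otimes E_{22}$ versus $E_{21}\otimes E_{11}+E_{22}\otimes E_{21}$. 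The discrepancy is exactly the flip of the two tensor factors: transposition is a coalgebra \emph{anti}-homomorphism, which is precisely what the paper itself records for $\tau_1$, and no consistent ``tracking of the transpose'' will make the two sides literally agree. So $\Psi_q'$ is an isomorphism of algebras and an anti-isomorphism of coalgebras; the bialgebra statement is only correct after replacing $\Delta$ by its co-opposite (the composition of $\Delta$ with the flip of tensor factors) on one side, equivalently after replacing $B_q(n)$ by its co-opposite bialgebra. Note that the paper's own proof never checks this generator-level compatibility --- it stops after establishing the three facts in your first three paragraphs --- so on this point you have not fallen short of the paper; you have located a flip that the statement and proof silently pass over, and the honest conclusion is to amend the statement rather than to search for the reconciliation your last paragraph hopes for.
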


\begin{proof}
The fact that the elements $E_{ji}$ satisfy the defining relations for $c_{st}$ in Section ~\ref{qmatrixDD} follows from the computations in the Proof of Proposition ~\ref{newdefining}. 
The fact that the set $\{ E_{ji} | 1\leq i, j\leq n\}$ generates the algebra $(\K_{\q}(n),\bullet)$ follows from Proposition ~\ref{newbasis}, since the multiplications $\cdot$ and $\bullet$ differ only by a twist of a cocycle by comparing the definitions. (In particular, we have $E^{M} =q^{\sum_{i> k, j<l} m_{st} m_{kl}} 1_{M} $, where the multiplication is taken with respect to $\bullet$.)   The fact that $\Delta$ is an algebra homomorphism with respect to the algebra multiplication $\bullet$ follows from the proof of Proposition ~\ref{newalgebrahomo} (by forgetting  the twist).
\end{proof}

 By comparing Theorem ~\ref{FRT-isom}  with Proposition ~\ref{DD-isom}, we have
 
 \begin{cor}
 \label{A-B-coalgebra}
The assignment $E_{st} \mapsto c_{ji}$, for any $1\leq i, j \leq n$ defines an isomorphism of coalgebras
\[
(A_{\q}(n), \Delta)  \to (B_q(n),\Delta).
\]
 \end{cor}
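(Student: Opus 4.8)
The plan is to leverage the two bialgebra isomorphisms already in hand, $\Psi_q$ from Theorem~\ref{FRT-isom} and $\Psi_q'$ from Proposition~\ref{DD-isom}, and to observe that they share a common target \emph{coalgebra}. Indeed, the comultiplication $\Delta$ appearing in both statements is the single map defined in (\ref{newcomult}); it is built entirely from the structure constants $c^L_{M,N}$, which are purely geometric and make no reference to either multiplication $\cdot$ or $\bullet$. Hence the coalgebras $(\K_{\q}(n),\Delta)$ underlying $(\K_{\q}(n),\cdot,\Delta)$ and $(\K_{\q}(n),\bullet,\Delta)$ are literally the same object, and this is the entire mechanism behind the corollary.

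First I would record that, in particular, $\Psi_q\colon A_{\q}(n)\to(\K_{\q}(n),\Delta)$ and $\Psi_q'\colon B_q(n)\to(\K_{\q}(n),\Delta)$ are isomorphisms of coalgebras, since a bialgebra isomorphism is a fortiori a coalgebra isomorphism. Then the composite $\Phi:=(\Psi_q')^{-1}\circ\Psi_q\colon A_{\q}(n)\to B_q(n)$ is automatically an isomorphism of coalgebras, being a composition of two such.

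Finally I would identify $\Phi$ on generators. By Theorem~\ref{FRT-isom} we have $\Psi_q(E_{ij})=1_{e_{ij}}$, while Proposition~\ref{DD-isom} gives $\Psi_q'(c_{ji})=E_{ij}=1_{e_{ij}}$, so that $(\Psi_q')^{-1}(1_{e_{ij}})=c_{ji}$. Combining, $\Phi(E_{ij})=c_{ji}$, which is exactly the asserted assignment. I expect no genuine obstacle here: the whole content is the observation that passing to coalgebras discards the multiplication entirely, so that the two Hopf algebras $A_{\q}(n)$ and $B_q(n)$—which by the comparison in (\ref{mult-h}) differ precisely by the cocycle twist separating $\cdot$ from $\bullet$—become indistinguishable once only $\Delta$ is retained. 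The reader familiar with \cite{DPW91} will recognize this as a conceptual reproof of their coalgebra isomorphism, now transparent from the geometric construction.
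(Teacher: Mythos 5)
Your proof is correct and is exactly the paper's argument: the paper derives the corollary ``by comparing Theorem~\ref{FRT-isom} with Proposition~\ref{DD-isom},'' i.e., by composing $(\Psi_q')^{-1}\circ\Psi_q$ and using that both bialgebra isomorphisms share the single underlying coalgebra $(\K_{\q}(n),\Delta)$ of (\ref{newcomult}), which is precisely your mechanism, including the generator chase $E_{ij}\mapsto 1_{e_{ij}}\mapsto c_{ji}$. (Only a cosmetic slip: $A_{\q}(n)$ and $B_q(n)$ are bialgebras, not Hopf algebras; the antipodes live on the localizations $\GL_{\q}(n)$ and $\GL_q^{DD}(n)$.)
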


By Theorem ~\ref{FRT-isom} and Proposition ~\ref{DD-isom}, we  can  identify $A_{\q}(n)$ and $B_q(n)$ with $\K_{\q}(n)$ via $\Psi_q$ and $\Psi_q'$.   
Under such identifications, we  regard the quantum determinants $\mrm{det}_{\q}$ in (\ref{FRT-determinant}) and $\mrm{det}_q^{DD}$ in (\ref{DD-determinant}) as elements in $\K_{\q}(n)$.   
We shall identify elements in $S_n$ with the permutation matrices in $\Theta$. 
Under such an identification, we have 
\[
l(\sigma) = \sum_{i>k, j<l} \sigma_{st} \sigma_{kl}, \quad \forall \sigma\in S_n.
\]

\begin{cor} 
\label{determinant}
In $\K_{\q}(n)$, we have 
\[
\mrm{det}_{\q} = \mrm{det}_q^{DD}=\sum_{\sigma\in S_n} (-1)^{l(\sigma)} 1_{\sigma},
\]
which is independent of the choice of the finite field $\mbb F_q$.
\end{cor}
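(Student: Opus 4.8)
The plan is to transport both quantum determinants into $\K_{\q}(n)$ along the bialgebra isomorphisms $\Psi_q$ of Theorem~\ref{FRT-isom} and $\Psi_q'$ of Proposition~\ref{DD-isom}, and then to evaluate the resulting monomials with the PBW-type identities of Proposition~\ref{newbasis}. The key observation, which I would record first, is that for $\sigma\in S_n$ the associated matrix $M_{\sigma}=\sum_{i=1}^{n} e_{i,\sigma(i)}$ is a permutation matrix with a single nonzero entry in each row; hence the ordering of its nonzero entries by row coincides with the lexicographic order on indices used to define $E^{M}$. Consequently the ordered products that occur in the two determinants are, after applying the relevant isomorphism, precisely the standard monomials $E^{M_{\sigma}}$.

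First I would handle the Faddeev--Reshetikhin--Takhtajan side. Under $\Psi_q$ the generator $E_{ij}$ goes to $1_{e_{ij}}$, so the product $E_{1,\sigma(1)}\cdots E_{n,\sigma(n)}$ (taken with $\cdot$) equals $E^{M_{\sigma}}$, and Proposition~\ref{newbasis} gives
\[
E^{M_{\sigma}} = q^{\frac{1}{2}\sum_{i>k,\,j<l}(M_{\sigma})_{ij}(M_{\sigma})_{kl}}\,1_{M_{\sigma}} = (\q)^{l(\sigma)}\,1_{\sigma},
\]
the exponent being $l(\sigma)$ by the length formula $l(\sigma)=\sum_{i>k,\,j<l}\sigma_{ij}\sigma_{kl}$ recorded just above the statement. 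Multiplying by the structure coefficient $(-\q)^{-l(\sigma)}=(-1)^{l(\sigma)}(\q)^{-l(\sigma)}$ cancels the power of $q$ and leaves $(-1)^{l(\sigma)}1_{\sigma}$, so that $\mrm{det}_{\q}=\sum_{\sigma}(-1)^{l(\sigma)}1_{\sigma}$.

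The Dipper--Donkin side is parallel. Under $\Psi_q'$ we have $c_{ij}\mapsto E_{ji}=1_{e_{ji}}$, so the monomial $c_{\sigma(1),1}\cdots c_{\sigma(n),n}$ of $\mrm{det}^{DD}_q$ maps to $E_{1,\sigma(1)}\bullet\cdots\bullet E_{n,\sigma(n)}=E^{M_{\sigma}}$ in $(\K_{\q}(n),\bullet)$. Here the untwisted identity $E^{M_{\sigma}}=q^{\sum_{i>k,\,j<l}(M_{\sigma})_{ij}(M_{\sigma})_{kl}}\,1_{M_{\sigma}}=q^{l(\sigma)}1_{\sigma}$ recorded in the proof of Proposition~\ref{DD-isom} applies, and the coefficient $(-q)^{-l(\sigma)}=(-1)^{l(\sigma)}q^{-l(\sigma)}$ again cancels the full power of $q$, giving $\mrm{det}^{DD}_q=\sum_{\sigma}(-1)^{l(\sigma)}1_{\sigma}$. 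Comparing the two computations yields $\mrm{det}_{\q}=\mrm{det}^{DD}_q$ in $\K_{\q}(n)$, and since every coefficient in the common expression is $\pm 1$, the element is manifestly independent of the choice of finite field.

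The step I expect to carry the actual content, rather than bookkeeping, is the precise matching of exponents: one must verify that the power $q^{l(\sigma)/2}$ (resp.\ $q^{l(\sigma)}$) produced by Proposition~\ref{newbasis} is exactly the reciprocal of the $q$-part of $(-\q)^{-l(\sigma)}$ (resp.\ $(-q)^{-l(\sigma)}$), so that the two annihilate and the final coefficient is a sign. Everything else reduces to the substitution described above, once the coincidence of the row ordering with the lexicographic ordering for a permutation matrix is noted.
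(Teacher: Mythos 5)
Your proposal is correct and follows essentially the same route as the paper: both transport $\mrm{det}_{\q}$ and $\mrm{det}^{DD}_q$ into $\K_{\q}(n)$ via $\Psi_q$ and $\Psi_q'$, evaluate the resulting monomials $E^{M_\sigma}$ by Proposition~\ref{newbasis} (and its untwisted $\bullet$-analogue recorded in the proof of Proposition~\ref{DD-isom}), and cancel the power of $q$ against the coefficient $(-\q)^{-l(\sigma)}$ (resp.\ $(-q)^{-l(\sigma)}$) using $l(\sigma)=\sum_{i>k,\,j<l}\sigma_{ij}\sigma_{kl}$. Your explicit remark that the row ordering of a permutation matrix's nonzero entries agrees with the lexicographic ordering, so the determinant monomials really are the standard monomials $E^{M_\sigma}$, is a point the paper leaves implicit but is exactly the right justification.
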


\begin{proof}
By (\ref{FRT-determinant}) and Proposition ~\ref{newbasis}, we have 
\begin{align*}
\mrm{det}_{\q} &= \sum_{\sigma\in S_n} (-\q)^{-l(\sigma)} E_{1, \sigma(1)}\cdots E_{n,\sigma(n)}
=  \sum_{\sigma\in S_n} (-\q)^{-l(\sigma)}  (\q)^{\sum_{i>k, j<l} \sigma_{st} \sigma_{il}} 1_{\sigma} \\
&=\sum_{\sigma \in S_n} (-1)^{l(\sigma)} 1_{\sigma}.
\end{align*}
By (\ref{DD-determinant}) and an analog (or proof)  of Proposition ~\ref{newbasis}, we have
 \begin{align*}
\mrm{det}^{DD}_q &= \sum_{\sigma\in S_n} (-q)^{-l(\sigma)}  c_{\sigma(1), 1} \cdots c_{\sigma(n), n} 
= \sum_{\sigma\in S_n} (-q)^{-l(\sigma)} E_{1,\sigma(1)}\bullet \cdots \bullet E_{n,\sigma(n)}\\
&=  \sum_{\sigma\in S_n} (-q)^{-l(\sigma)}  q^{\sum_{i>k, j<l} \sigma_{st} \sigma_{il}} 1_{\sigma} 
=\sum_{\sigma \in S_n} (-1)^{l(\sigma)} 1_{\sigma}.
\end{align*}
Corollary follows from the above computations.
\end{proof}

By comparing Theorem ~\ref{FRT-isom} and Proposition ~\ref{DD-isom}, we see that the algebra $A_{\q}(n)$ of Faddeev-Reshetikhin-Takhtajan and the algebra $B_q(n)$ of Dipper-Donkin differ only by a twist on the multiplications, though they are not isomorphic in general. More precisely, we define a new multiplication on $B_q(n)$ by 
\[
c_{st} \tilde \bullet  c_{kl} =  q^{\frac{1}{2} \sum_{\alpha<\beta} (e_l)_{\alpha} (e_j)_{\beta} -(e_k)_{\alpha} (e_i)_{\beta}} c_{st} c_{kl}, \quad \forall 1\leq i, j, k, l\leq n,
\]
where $(e_i)_{\alpha}$ denotes the $\alpha$th component of $e_i$.  It is clear that the operation $\tilde \bullet$ is associative. Moreover, we have 

\begin{cor}
\label{A-B}
 The assignment $c_{st} \mapsto E_{ji}$, for any $1\leq i, j\leq n$,  defines an isomorphism from 
the bialgebra $(B_q(n), \tilde \bullet, \Delta) $ to $A_{\q}(n)$.
 \end{cor}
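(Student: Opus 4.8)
The plan is to put both Hopf algebras on the common geometric space $\K_{\q}(n)$ and to recognize $\tilde\bullet$ as exactly the cocycle twist that converts the Dipper--Donkin product $\bullet$ into the Faddeev--Reshetikhin--Takhtajan product $\cdot$. Recall that $\Psi_q\colon A_{\q}(n)\to(\K_{\q}(n),\cdot,\Delta)$ of Theorem~\ref{FRT-isom} sends $E_{ij}\mapsto 1_{e_{ij}}$, while $\Psi_q'\colon B_q(n)\to(\K_{\q}(n),\bullet,\Delta)$ of Proposition~\ref{DD-isom} sends $c_{ij}\mapsto 1_{e_{ji}}$. Comparing the definition~(\ref{newmult}) of $\cdot$ with the definition~(\ref{DD-mult}) of $\bullet$ shows that on homogeneous elements
\[
1_{M''}\cdot 1_{M'}=(\q)^{\sum_{\alpha<\beta}c_\alpha' c_\beta''-d_\alpha' d_\beta''}\,(1_{M''}\bullet 1_{M'}),
\]
where $(\mbf c'',\mbf d'')=(\ro(M''),\co(M''))$ and $(\mbf c',\mbf d')=(\ro(M'),\co(M'))$. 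Thus $(\K_{\q}(n),\cdot)$ is the twist of $(\K_{\q}(n),\bullet)$ by the function $\sigma$ on the grading monoid $\mbb N^n\times\mbb N^n$ (graded by $(\ro,\co)$-type) given by the displayed scalar; since its exponent is bilinear in the two degrees, $\sigma$ is a normalized $2$-cocycle, so the twisted product is associative.

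First I would transport this twist across $\Psi_q'$. Since $\bullet$ is graded by $(\ro,\co)$-type and $\Psi_q'$ carries each $c_{ij}$ to the homogeneous element $1_{e_{ji}}$ of degree $(e_j,e_i)$, the map $\Psi_q'$ is a graded isomorphism and $\sigma$ pulls back to a bicharacter on the degrees of $B_q(n)$. The key computation is to evaluate it on a pair of generators: substituting $\mbf c''=e_j$, $\mbf d''=e_i$, $\mbf c'=e_l$, $\mbf d'=e_k$ gives
\[
\sigma\big((e_j,e_i),(e_l,e_k)\big)=(\q)^{\sum_{\alpha<\beta}(e_l)_\alpha(e_j)_\beta-(e_k)_\alpha(e_i)_\beta},
\]
which is precisely the scalar defining $c_{ij}\tilde\bullet c_{kl}$. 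Hence $\tilde\bullet$ agrees on generators with the $\sigma$-twist of the ordinary Dipper--Donkin product; as both are graded bilinear twists of the same product, they coincide on all of $B_q(n)$, so $\tilde\bullet$ is that twist.

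Finally, since twisting a graded algebra by a fixed $2$-cocycle is functorial for graded algebra homomorphisms, $\Psi_q'$ remains an algebra isomorphism after twisting, giving $\Psi_q'\colon(B_q(n),\tilde\bullet)\to(\K_{\q}(n),\cdot)$; composing with $\Psi_q^{-1}$ produces the algebra isomorphism $\Psi_q^{-1}\circ\Psi_q'\colon(B_q(n),\tilde\bullet)\to A_{\q}(n)$ sending $c_{ij}\mapsto 1_{e_{ji}}\mapsto E_{ji}$, as asserted. No new work is needed for the coproduct: $\Delta$ is untouched by the multiplicative twist, and $c_{ij}\mapsto E_{ji}$ is already the coalgebra isomorphism of Corollary~\ref{A-B-coalgebra}, so combining the two statements yields the bialgebra isomorphism. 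The only genuine obstacle is the bookkeeping in the displayed cocycle computation---keeping straight that under $\Psi_q'$ the first index of $c_{ij}$ becomes a column and the second a row, and not conflating the summation variables $\alpha<\beta$ with the generator labels $i,j,k,l$---after which the argument is purely formal.
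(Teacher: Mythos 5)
Your proof is correct and takes essentially the same route as the paper: it realizes $A_{\q}(n)$ and $B_q(n)$ on $\K_{\q}(n)$ via Theorem~\ref{FRT-isom} and Proposition~\ref{DD-isom}, reads off the bicharacter relating $\cdot$ and $\bullet$ from (\ref{newmult}) and (\ref{DD-mult}), and checks on generators that its pullback along $\Psi_q'$ (with the row/column swap $c_{st}\mapsto 1_{e_{ji}}$) is exactly the twist defining $\tilde\bullet$. Your explicit remarks that the exponent is bilinear in the $(\ro,\co)$-degrees, so the twist is a genuine cocycle and extends uniquely from generators to all of $B_q(n)$, only make precise what the paper treats as clear.
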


 Another way to state Corollary ~\ref{A-B} is to get rid of the twist in $A_{\q}(n)$ to obtain $B_q(n)$ as follows.
 We define a new multiplication on $A_{\q}(n)$ by 
 \begin{align}
 \label{A-modified}
 E_{st}\;  \tilde \cdot \;  E_{kl} = (\q)^{\sum_{\alpha<\beta} - (e_k)_{\alpha} ( e_j)_{\beta} + (e_l)_{\alpha} (e_j)_{\beta}} 
 E_{st} \cdot  E_{kl}, \quad \forall 1\leq i, j\leq n.
 \end{align}
 Then we have an isomorphism of bialgebras 
 \begin{align}
 \label{B-A}
 B_q(n) \to (A_{\q}(n), \tilde \cdot, \Delta)
 \end{align}
 given by sending $c_{st}$ to $E_{ji}$ for any $1\leq i, j \leq n$.
 
 Since the quantum determinant $ \mrm{det}_{\q}=\mrm{det}^{DD}_{q} $ is central in $A_{\q}(n)$, we see, say ~\cite{Lam},  that
 $\GL_{\q}(n) $ is a Hopf algebra  generated by $E_{st}$ for any $1\leq i, j\leq n$ and 
 the symbol $\mrm{det}_{\q}^{-1} = \mrm{det}^{DD, -1}_q$ subject to the defining relations of $A_{\q}(n)$ and the following
 relations:
 \begin{align}
 \label{A-extra}
 E_{st}  \mrm{det}_{\q}^{-1} =  \mrm{det}_{\q}^{-1} E_{st},\quad 
 \mrm{det}_{\q} \mrm{det}_{\q}^{-1} = 1 = \mrm{det}_{\q}^{-1} \mrm{det}_{\q}, \quad \forall 1\leq i, j\leq n.
 \end{align}
 If we modify the multiplication on $\GL_{\q}(n)$ by (\ref{A-modified}) and the following
 \begin{align*}
 q^{j-i}  E_{st} \; \tilde \cdot  \;  \mrm{det}_{\q}^{-1} &=  \mrm{det}_{\q}^{-1} \;   \tilde \cdot \;  E_{st}
 & & \forall  1\leq i \leq j \leq n,\\
E_{st} \;  \tilde \cdot \;  \mrm{det}_{\q}^{-1}  &= q^{i-j}  \mrm{det}_{\q}^{-1}  \; \tilde \cdot \; E_{st} 
&& \forall 1\leq j< i\leq n,\\
  \mrm{det}_{\q} \; \tilde \cdot \;  \mrm{det}_{\q}^{-1} = 1 &= \mrm{det}_{\q}^{-1} \;  \tilde \cdot  \; \mrm{det}_{\q}, 
  && \forall 1\leq i, j\leq n.
 \end{align*}
 We see that $(\GL_{\q}(n), \tilde \cdot , \Delta)$ is still a  bialgebra.

 From Theorem ~\ref{FRT-isom}, Proposition ~\ref{DD-isom}, Corollary ~\ref{A-B}  and  Corollary ~\ref{determinant}, 
 we have the following  natural generalization of Corollary ~\ref{A-B-coalgebra} and (\ref{B-A}).
 
 \begin{thm}
 \label{FRT-DD}
 The assignment  of sending $c_{st}$  to $E_{ji} $, for any $1\leq i, j\leq n$,  
 and $ \mrm{det}_{q}^{DD,-1}$ to 
 $\mrm{det}_{\q}^{-1}$   defines an isomorphism of bialgebras
 \[
 \Xi: \GL_q^{DD}(n) \to (\GL_{\q}(n), \tilde \cdot, \Delta) .
 \]
 In particular, the coalgebras $(\GL_q^{DD}(n), \Delta)$ and $(\GL_{\q}(n), \Delta)$ are isomorphic via $\Xi$. 
 \end{thm}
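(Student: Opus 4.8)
The plan is to assemble Theorem~\ref{FRT-DD} from the pieces already established, lifting the bialgebra isomorphism on the matrix algebras to the localized Hopf algebras. The isomorphism~(\ref{B-A}) already gives that $c_{st}\mapsto E_{ji}$ is a bialgebra isomorphism $B_q(n)\to(A_{\q}(n),\tilde\cdot,\Delta)$, and Corollary~\ref{determinant} gives the key fact that the two quantum determinants coincide in $\K_{\q}(n)$: $\mrm{det}_{\q}=\mrm{det}^{DD}_q$. Since $\GL^{DD}_q(n)$ and $\GL_{\q}(n)$ are by definition the localizations of $B_q(n)$ and $A_{\q}(n)$ at their respective quantum determinants, the first step is to check that the bialgebra map~(\ref{B-A}) sends $\mrm{det}^{DD}_q$ precisely to $\mrm{det}_{\q}$, so that it descends to the localizations. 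This is exactly what Corollary~\ref{determinant} secures, once we verify that $\Psi'_q$ and the twisted multiplication $\tilde\cdot$ are compatible with the determinant expansions; the centrality of $\mrm{det}_{\q}$ in $(A_{\q}(n),\tilde\cdot)$ (noted in~(\ref{A-extra})) permits the universal property of localization to extend the map.

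Next I would verify that $\Xi$ respects the extra defining relations~(\ref{A-extra}) after the twist. The relations displayed just before the theorem statement prescribe exactly how $E_{st}$ and $\mrm{det}_{\q}^{-1}$ commute under $\tilde\cdot$, with the scalar $q^{\pm(j-i)}$ or $q^{\pm(i-j)}$ depending on whether $i\le j$ or $j<i$; these are designed so that the image of the antipode relation~(\ref{antipode-DD}) for Dipper--Donkin matches the antipode~(\ref{antipode}) for Faddeev--Reshetikhin--Takhtajan under the substitution $E_{st}\mapsto E_{ji}$. So the step here is to check that the twist factor in~(\ref{A-modified}) applied to products involving $\mrm{det}_{\q}^{-1}$ produces precisely these relations, and hence that $\Xi$ is a well-defined algebra homomorphism on the localization; surjectivity and injectivity follow because $\Xi$ is already a bijection on generators and on $\mrm{det}^{-1}$ by construction, matching~(\ref{B-A}) on the matrix part.

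Finally, the coalgebra statement is immediate: the comultiplication $\Delta$ on both $\GL^{DD}_q(n)$ and $\GL_{\q}(n)$ is the same geometric coproduct built from the structure constants $c^L_{M,N}$, untouched by any multiplicative twist, and both satisfy $\Delta(T)=T\otimes T$ on the inverse determinant. Proposition~\ref{newalgebrahomo} (valid for $\Delta$ with respect to both $\cdot$ and $\bullet$, hence also after the cocycle twist, since the twist affects only multiplication) shows $\Delta$ is a coalgebra map on the matrix part, and the determinant grouplike relation extends it to the localization. Therefore $\Xi$ intertwines the coproducts.

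I expect the main obstacle to be the careful bookkeeping in the second step: confirming that the twisted commutation relations between $E_{st}$ and $\mrm{det}_{\q}^{-1}$ are exactly those forced by transporting the Dipper--Donkin relations through the substitution $c_{st}\mapsto E_{ji}$. The subtlety is that the cocycle in~(\ref{A-modified}) is defined on the $E_{st}$ generators, and one must compute its effect on the degree-$n$ determinant expression and verify consistency with the power $q^{\pm(i-j)}$ appearing in the localization relations; because $\mrm{det}$ is central on one side but only quasi-central (up to the scalar $q^{i-j}$) on the twisted side, this matching is where the real content of the theorem lies. Once this compatibility is checked, the universal property of localization delivers the Hopf-algebra (here, bialgebra) isomorphism $\Xi$ mechanically.
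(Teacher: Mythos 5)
Your proposal is correct and takes essentially the same route as the paper, which deduces Theorem \ref{FRT-DD} by combining the matrix-algebra isomorphism (\ref{B-A}) (equivalently Theorem \ref{FRT-isom}, Proposition \ref{DD-isom} and Corollary \ref{A-B}) with the equality of quantum determinants in Corollary \ref{determinant}, and then passing to the localizations exactly as you describe. The only slip is your first paragraph's appeal to (\ref{A-extra}) for centrality of $\mrm{det}_{\q}$ under $\tilde\cdot$ — that relation concerns the original multiplication $\cdot$, and under $\tilde\cdot$ the determinant is only quasi-central — but your final paragraph identifies and handles precisely this point, so the argument stands.
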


 It is clear, from the definitions, that 
\begin{align*}
S \Xi (c_{st}) &= S(E_{ji}) = (-\q)^{i-j} A(i, j) \cdot  \mrm{det}_{\q}^{-1},\\
\Xi S^{DD} (c_{st})& = \Xi ( (-1)^{i+j} A^{DD}(j,i) \mrm{det}^{DD, -1}_q) = (-1)^{i+j} A(i, j) \;\tilde \cdot \;\mrm{det}_{\q}^{-1} =(-\q)^{i-j} A(i, j) \cdot  \mrm{det}_{\q}^{-1},
\end{align*}
where $S$ and $S^{DD}$ are antipodes for $\GL_{\q}(n)$ and $\GL_q^{DD}(n)$ defined in (\ref{antipode}) and (\ref{antipode-DD}), respectively.
So we have 
\[
S \Xi (c_{st})  = \Xi S^{DD} (c_{st}) \quad \forall 1\leq i, j\leq n.
\]
In the isomorphism  $\Xi$, the multiplication of $\GL_{\q}(n)$ has to be modified by a twist of a cocycle. So we shall not expect  the antipode $S$ defined  by using the original multiplication ``$\cdot$'' 
to be compatible with $S^{DD}$ with respect to $\Xi$.  Instead, we let 
\[
\tilde S: \GL_{\q}(n) \to \GL_{\q}(n)
\] 
be the new antipode on $\GL_{\q}(n)$ defined by the same formula (\ref{antipode}) under the new multiplication ``$\;\tilde \cdot\;$''.

\begin{cor}
$\Xi: \GL_q^{DD}(n) \to (\GL_{\q}(n), \tilde \cdot, \Delta, \tilde S)$ is an isomorphism of Hopf algebras. 
\end{cor}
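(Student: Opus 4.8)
The plan is to build on Theorem~\ref{FRT-DD}, which already gives an isomorphism of bialgebras $\Xi\colon\GL_q^{DD}(n)\to(\GL_{\q}(n),\tilde\cdot,\Delta)$, so that the only remaining content is the compatibility of the antipodes. Concretely, I would prove the single operator identity
\[
\tilde S\circ\Xi=\Xi\circ S^{DD}.
\]
This one identity does all the work at once: since $\Xi$ is a bialgebra isomorphism and $S^{DD}$ is an antipode for $\GL_q^{DD}(n)$, it exhibits $\tilde S=\Xi\, S^{DD}\,\Xi^{-1}$ as the transported antipode, which in particular makes $(\GL_{\q}(n),\tilde\cdot,\Delta,\tilde S)$ a genuine Hopf algebra and upgrades $\Xi$ to a Hopf-algebra isomorphism.

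To verify the identity I would reduce it to the generators. Both $\tilde S\circ\Xi$ and $\Xi\circ S^{DD}$ are order-reversing into $(\GL_{\q}(n),\tilde\cdot)$: $S^{DD}$ is an algebra anti-homomorphism and $\Xi$ an algebra homomorphism, so $\Xi\circ S^{DD}$ reverses products and uses $\tilde\cdot$ on the target, and the same holds for $\tilde S\circ\Xi$ since $\tilde S$ is by construction an anti-homomorphism for $\tilde\cdot$. As $\GL_q^{DD}(n)$ is generated as an algebra by the $c_{ij}$ together with $\mrm{det}_q^{DD,-1}$, it therefore suffices to check the identity on these generators.

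On the grouplike generator this is immediate: $S^{DD}(\mrm{det}_q^{DD,-1})=\mrm{det}_q^{DD}$, so $\Xi\,S^{DD}(\mrm{det}_q^{DD,-1})=\Xi(\mrm{det}_q^{DD})=\mrm{det}_{\q}$ by Corollary~\ref{determinant}, while $\tilde S\,\Xi(\mrm{det}_q^{DD,-1})=\tilde S(\mrm{det}_{\q}^{-1})=\mrm{det}_{\q}$ from the extension of formula~(\ref{antipode}) that inverts the grouplike determinant. On the generators $c_{ij}$ I would invoke the computation displayed just before the statement, which evaluates $\Xi\,S^{DD}(c_{ij})=(-1)^{i+j}A(i,j)\;\tilde\cdot\;\mrm{det}_{\q}^{-1}$; the remaining task is to recognize this as $\tilde S(E_{ji})=\tilde S\,\Xi(c_{ij})$, i.e. as the value of formula~(\ref{antipode}) read under $\tilde\cdot$.

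The main obstacle is precisely this last matching on the $c_{ij}$, and it is a scalar/cocycle bookkeeping issue rather than a conceptual one. The Dipper--Donkin formula carries the $q$-free weight $(-1)^{i+j}$ while the Faddeev--Reshetikhin--Takhtajan formula~(\ref{antipode}) carries $(-\q)^{i-j}$, and correspondingly the two quantum minors written $A(i,j)$ are normalized with $(-q)$- and $(-\q)$-weights; one must check that the weight discrepancy in the minor is absorbed exactly by the cocycle twist relating $\cdot$ and $\tilde\cdot$, so that $(-1)^{i+j}$ times the Dipper--Donkin minor equals $(-\q)^{i-j}$ times the Faddeev--Reshetikhin--Takhtajan minor. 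This is the same cancellation already made explicit in Corollary~\ref{determinant}, where the normalizations $(-\q)^{-l(\sigma)}$ and $(-q)^{-l(\sigma)}$ both collapse to $(-1)^{l(\sigma)}1_\sigma$; I would carry out that computation for the $(n-1)\times(n-1)$ minors, the remaining points---that $\tilde S$ is an anti-homomorphism for $\tilde\cdot$ and that the listed elements generate---being routine.
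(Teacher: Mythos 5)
Your proposal matches the paper's own argument: the paper's entire proof of this corollary is the displayed generator-level identity $S\Xi(c_{st})=\Xi S^{DD}(c_{st})$ — obtained from the quantum-minor analogue of Corollary~\ref{determinant} plus the cocycle bookkeeping converting $\tilde\cdot$ into $\cdot$ against $\mrm{det}_{\q}^{-1}$ — combined with the observation that $\tilde S$, defined by formula~(\ref{antipode}) under $\tilde\cdot$, is precisely the transported antipode $\Xi S^{DD}\Xi^{-1}$. Your explicit reduction to generators via the anti-homomorphism property and the separate check on the grouplike generator $\mrm{det}_q^{DD,-1}$ only spell out what the paper leaves implicit, so the approach and content are essentially identical.
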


 \begin{rem}
\label{remark-1}
(1).  The quantized coordinate algebra of the space of matrices of size $m\times n$ in ~\cite{NYM93} can be realized geometrically in the same way as that of $A_{\q}(n)$ and $B_q(n)$ by considering the set of double flags of $m$-step  in the first component and $n$-step in the second component.

(2).  All similar results can be obtained over the ring $\mbb Z[\q, q^{-\frac{1}{2}}] $ of Laurent polynomials.

(3). From ~\cite{Lu99} and ~\cite{Lu00}, we see that there are polynomials $c^L_{M, N}(v)$ and $h^{L}_{M, N}(v)$ in   $\mbb Z[v, v^{-1}]$ such that 
$c^L_{M, N}(\q) = c^L_{M, N}$ and $h^L_{M. N}(\q) = h^L_{M, N}$, for any prime power $q$. 
Therefore, the results in this paper can be extended to results on objects over $\mbb Z[v, v^{-1}]$ for $v$ an indeterminate.

(4). Proposition ~\ref{DD-isom} is dual to the geometric construction of $q$-Schur algebras in   ~\cite{BLM90} and ~\cite{Lu00}.

(5).  The expression $\sum_{\sigma\in S_n} (-1)^{l(\sigma)} 1_{\sigma}$ in Corollary ~\ref{determinant} is exactly the Young symmetrizer of $S_n$ which produces the sign representation of $S_n$. The determinant function also defines a representation of $S_n$. So everything matches up. See ~\cite[Lecture 4]{FH91}. We thank Dave Hemmer for pointing out this to us. 

\end{rem}

\subsection{Category $\A$}

Let $\A\equiv \A_{\q}(n)$ be the category as follows: 
\begin{itemize}
\item objects:  $(V, F)\in \F(D) \times \F(D)$,  for any finite dimensional vector space $D$ over $\mbb F_q$;
\item morphisms: $\Hom_{\A} ((V, F), (V', F')) =\{ f: D\to D' \; \mbox{linear maps}\;| f(V, F) \subseteq (V', F')\}$, for any $(V, F) \in \F(D) \times \F(D)$ and $(V', F')\in \F(D')\times \F(D')$. 
\end{itemize} 
We have 

\begin{prop}
$\A$ is an exact   category.
\end{prop}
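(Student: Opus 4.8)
The plan is to verify the Quillen exact category axioms for $\A$ directly, using the fact that $\A$ sits inside a well-understood abelian category. First I would identify the ambient abelian category: objects $(V,F)$ of $\A$ are pairs of $n$-step flags on a finite-dimensional $\mbb F_q$-vector space $D$, and a morphism $f$ is an $\mbb F_q$-linear map respecting both filtrations, i.e.\ $f(V_i)\subseteq V_i'$ and $f(F_j)\subseteq F_j'$ for all $i,j$. Thus $\A$ is nothing but the category of \emph{bifiltered} finite-dimensional $\mbb F_q$-vector spaces with filtration-preserving maps. This is an additive category (direct sums are formed componentwise, with the induced flags $V_i\oplus V_i'$ and $F_j\oplus F_j'$), and it embeds fully faithfully into the abelian category of representations of the poset $[n]\times[n]$ (or equivalently bifiltered vector spaces), so it is a natural candidate for an exact structure.

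Next I would designate the admissible short exact sequences. The correct class is the sequences $0\to (V',F')\xrightarrow{i}(V,F)\xrightarrow{p}(V'',F'')\to 0$ that are short exact on the underlying vector spaces $D'\hookrightarrow D\twoheadrightarrow D''$ \emph{and strict with respect to both filtrations}; that is, $i$ identifies $V_k'$ with $V_k\cap i(D')$ and $F_k'$ with $F_k\cap i(D')$, while $p$ carries $V_k$ onto $V_k''$ and $F_k$ onto $F_k''$. Equivalently, each induced sequence of subquotients $0\to V_k'/V_{k-1}'\to V_k/V_{k-1}\to V_k''/V_{k-1}''\to 0$ (and likewise for $F$) is exact. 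With this class singled out, I would then check the Quillen--Keller axioms in turn: that the class is closed under isomorphism and contains the split sequences; that admissible monics and admissible epics are each closed under composition; and, the heart of the matter, that pushouts of admissible monics along arbitrary morphisms exist and remain admissible monics, with the dual statement for pullbacks of admissible epics.

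The key steps in order would be: (1) show $\A$ is additive, exhibiting the biproduct with its componentwise bifiltration and the obvious zero object $0$; (2) specify the admissible sequences as the filtration-strict short exact sequences above; (3) confirm the easy axioms (identities, isomorphism-closure, split sequences are admissible); (4) prove closure of admissible monics under composition, which amounts to checking that a composite of strict filtered injections is again strict---this follows from the associativity of intersecting and the transitivity of the strictness conditions; (5) dually for admissible epics; and (6) establish the pushout/pullback stability axiom. For step (6) I would form the pushout in the ambient vector-space category and equip it with the images of the two flags, then verify that the resulting square has the strictness property. The main obstacle I expect is precisely this pushout/pullback axiom: while exactness on underlying spaces is automatic, one must check that the pushout inherits a \emph{strict} bifiltration, and this is where the combinatorics of the two flags can interfere. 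The cleanest route is to reduce the bifiltered statement to a pair of singly-filtered statements---the category of filtered vector spaces with strict maps is a standard exact category---and then argue that strictness in each filtration separately, together with exactness of the associated graded in each, forces the needed stability; alternatively, one embeds $\A$ as an extension-closed, additive subcategory of the abelian category of $[n]\times[n]$-representations and invokes the standard fact (see the Quillen exact structure on extension-closed subcategories) that such a subcategory automatically inherits an exact structure, reducing the whole proof to verifying that $\A$ is closed under extensions inside that abelian category.
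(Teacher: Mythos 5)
The paper contains no actual proof of this proposition: it says only that ``This can be proved straightforwardly,'' and then remarks that $\A$ is not abelian. So your proposal is not an alternative to the paper's argument --- it \emph{is} the missing argument, and in substance it is correct. Your identification of $\A$ with bifiltered finite-dimensional $\mbb F_q$-vector spaces is right; your class of admissible sequences (exact on underlying spaces and strict in both filtrations, equivalently exact on both associated gradeds) is the correct one, and strictness is precisely what makes these sequences kernel--cokernel pairs in $\A$; and the crucial pushout axiom does go through along the lines you indicate: if $i:(D',V',F')\rightarrowtail (D,V,F)$ is an admissible monic and $g:(D',V',F')\to (E,W,G)$ is arbitrary, the vector-space pushout $P=(D\oplus E)/\{(i(x),-g(x))\}$ endowed with the image filtrations $P_k=\mathrm{im}(V_k\oplus W_k)$ is the pushout in $\A$, and strictness of $i$ is exactly what yields $E\cap P_k=W_k$, so $E\to P$ is again an admissible monic (dually for pullbacks of admissible epics).

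Two statements in your plan are wrong as written, though both are repairable and neither derails the main route. First, representations of the poset $[n]\times[n]$ are \emph{not} equivalent to bifiltered vector spaces: a pair of flags gives such a representation only via $(i,j)\mapsto V_i\cap F_j$, and a representation with injective structure maps comes from a pair of flags only if it also satisfies $M_{i,j}=M_{i,n}\cap M_{n,j}$ inside $M_{n,n}$. If you want the extension-closed-subcategory shortcut, you must either check that this intersection condition (as well as injectivity of the structure maps, which follows from the snake lemma) is preserved by extensions --- it is, by a short diagram chase --- or, more cleanly, embed $\A$ into representations of the poset consisting of two $n$-chains glued at their common top element (the diagram $V_1\subseteq\cdots\subseteq D\supseteq\cdots\supseteq F_1$), where the essential image is exactly the representations with all structure maps injective and extension-closure is immediate. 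Second, ``the category of filtered vector spaces with strict maps'' is not an exact category --- it is not even additive, since a sum of strict maps need not be strict; what you mean is the category of filtered spaces with \emph{all} filtration-preserving maps, equipped with the class of strict short exact sequences as its exact structure.
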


This can be proved straightforwardly. 

We remark that  $\A$ is not abelian. Take $f=id: D\to D$ and take $(V, F) $ and $(V', F')$ 
such that $V\subsetneq V'$ and $F\subsetneq F'$. Then there is  no cokernel of $f$ in $\A$. 
The algebra $(A_{\q}(n), \circ)$ can then be interpreted as the Hall algebra of $\A$.

\section{Acknowledgment} 

This paper is grown out of the quest to answer the question of defining an Heisenberg algebra action on the function space of double $n$-step flag varieties in analogue with the one in ~\cite{W01}. This question is  raised by Weiqiang Wang during the author's visit at University of Virginia in 2010.  We thank Weiqiang for his kind invitation. 
We also thank Professor Masatoshi Noumi for sending his paper ~\cite{NYM93} to the author and H.C. Zhang and J. Du for helpful comments.
We are very grateful to the referee for the suggestions and corrections in improving this
article.
Partial support from NSF grant  DMS 1160351 is  acknowledged.

\end{document}